\documentclass[12pt,reqno]{amsart}
\usepackage{amsmath, amsthm, amssymb}

\topmargin 1cm
\advance \topmargin by -\headheight
\advance \topmargin by -\headsep
     
\setlength{\paperheight}{270mm}%
\setlength{\paperwidth}{192mm}%
\textheight 22.5cm
\oddsidemargin 1cm
\evensidemargin \oddsidemargin
\marginparwidth 1.25cm
\textwidth 14cm
\setlength{\parskip}{0.05cm}

\newtheorem{theorem}{Theorem}[section]
\newtheorem{lemma}[theorem]{Lemma}

\theoremstyle{definition}

\theoremstyle{remark}

\numberwithin{equation}{section}

\def\bfa{{\mathbf a}}
\def\bfb{{\mathbf b}}
\def\bfc{{\mathbf c}}
\def\bfd{{\mathbf d}}

\def\bfx{{\mathbf x}}

\def\calA{{\mathcal A}}  

\def\calB{{\mathcal B}}  \def\calBhat{{\widehat \calB}}

\def\calN{{\mathcal N}}

\def\calZ{{\mathcal Z}}

\def\dbN{{\mathbb N}}
\def\dbR{{\mathbb R}}
\def\dbZ{{\mathbb Z}}

\def\gra{{\mathfrak a}}\def\grA{{\mathfrak A}}
\def\grb{{\mathfrak b}}\def\grB{{\mathfrak B}}
\def\grc{{\mathfrak c}}

\def\grJ{{\mathfrak J}}

\def\grm{{\mathfrak m}}\def\grM{{\mathfrak M}}\def\grN{{\mathfrak N}}
\def\grn{{\mathfrak n}}\def\grS{{\mathfrak S}}
\def\grB{{\mathfrak B}}
\def\grK{{\mathfrak K}}

\def\alp{{\alpha}} \def\bfalp{{\boldsymbol \alpha}}
  
\def\gam{{\gamma}} \def\bfgam{{\boldsymbol \gamma}}

\def\del{{\delta}} \def\Del{{\Delta}}

\def\tet{{\theta}} \def\bftet{{\boldsymbol \theta}} \def\Tet{{\Theta}}
\def\kap{{\kappa}}
 \def\Lam{{\Lambda}} 

\def\bfxi{{\boldsymbol \xi}}

\def\sig{{\sigma}}  

\def\Ups{{\Upsilon}} 

\def\ome{{\omega}} \def\Ome{{\Omega}} 
\def\d{{\partial}}
\def\eps{\varepsilon}

\def\le{\leqslant} \def\ge{\geqslant}

\def\d{{\,{\rm d}}}

\def\NN{{\mathbb N}}
\def\ZZ{{\mathbb Z}}

\def\eps{\varepsilon}
\def\al{\alpha}

\def\del{\delta}

\def\rho{\varrho}
\def\phi{\varphi}
\def\gm{\gamma}

\def\Ups{\Upsilon}

\newcommand{\be}[1]{\begin{equation}\label{#1}}
\newcommand{\ee}{\end{equation}}

\newcommand{\rf}[1]{{\rm (\ref{#1})}}

\def\cal{\mathcal}

\begin{document}
\title[Arithmetic harmonic analysis]{Arithmetic harmonic analysis for smooth quartic Weyl sums: three additive equations}
\author[J\"org Br\"udern]{J\"org Br\"udern}
\address{Mathematisches Institut, Bunsenstrasse 3--5, D-37073 G\"ottingen, Germany}
\email{bruedern@uni-math.gwdg.de}
\author[Trevor D. Wooley]{Trevor D. Wooley}
\address{School of Mathematics, University of Bristol, University Walk, Clifton, Bristol BS8 1TW, United Kingdom}
\email{matdw@bristol.ac.uk}
\subjclass[2010]{11D72, 11P55, 11E76}
\keywords{Quartic Diophantine equations, Hardy-Littlewood method.}
\thanks{The authors acknowledge support by Akademie der Wissenschaften zu 
G\"ottingen and Deutsche Forschungsgemeinschaft}
\date{}

\begin{abstract} We establish the non-singular Hasse principle for systems of three 
diagonal quartic equations in $32$ or more variables, subject to a certain rank condition. 
Our methods employ the arithmetic harmonic analysis of smooth quartic Weyl sums and 
also a new estimate for their tenth moment.
\end{abstract}
\maketitle

\section{Introduction} In recent years, investigations concerning the solubility of systems 
of diagonal Diophantine equations via the circle method have been enriched through the 
use of such unconventional elements as thin averages of Fourier coefficients only partially 
of arithmetic nature \cite{BW2007}, and moment estimates of odd order \cite{JEMS}. 
These innovations have been applied in several instances to surmount the barrier imposed  
by the classical scaling principle for suitably entangled systems of diagonal equations. This 
principle suggests that the number of variables required to solve a system should grow in 
proportion to the number of its equations. In particular, the recent work of the authors 
\cite{JEMS} concerning pairs of diagonal quartic equations applies estimates for cubic 
moments of Fourier coefficients to show that $22$ variables suffice to establish the Hasse 
principle. While the corresponding conclusion  for a single quartic equation is available only 
when the number of variables is at least $12$, our work \cite{JEMS} employs, on 
average, only $11$ variables per equation. We now develop such ideas further, and 
provide a flexible approach to the control of large values of Fourier coefficients associated 
with quartic Weyl sums. Once the arithmetic problem at hand is transformed into one in 
which only Fourier coefficients are present, one is at liberty to consider fractional numbers 
of variables, as well as fractional numbers of equations. We illustrate the potential of 
such ideas by investigating the Hasse principle for systems involving three diagonal quartic 
forms.\par

Consider a matrix $(a_{ij})\in\ZZ^{3\times s}$ with the associated system of equations
\be{1.1} \sum_{j=1}^s a_{ij}x_j^4 = 0 \quad (1\le i\le 3). \ee
We verify the Hasse principle for systems of the shape \rf{1.1}, subject to a suitable rank 
condition on $(a_{ij})$, whenever $s\ge 32$. This should be compared with the conclusion 
from \cite[Theorem 1]{BC}, which would furnish the Hasse principle for systems of $r$ 
diagonal quartic equations in $s$ variables only when $s>12r$. While the latter conclusion 
is consistent with the classical scaling principle mentioned above, our new result 
concerning the system \rf{1.1} employs an average of only $10\frac23$ variables per 
equation, and is even more economical than our earlier results \cite{JEMS} for pairs of 
equations. The new methods of this paper also improve the latter work, covering 
essentially all of those cases in $22$ or more variables that had previously defied 
resolution (see \cite{BW2016}).\par

In order to give a precise statement of our result, we introduce some notation. When 
$s\ge 3$ and any collection of three columns of the matrix $(a_{ij})$ is linearly 
independent, we refer to $(a_{ij})$ as being {\em highly non-singular}.  We say that the 
matrix of coefficients $(a_{ij})$ is {\em propitious} when $s\ge 32$ and it has the block 
structure $(A_0,A_1,\ldots,A_7,B)$, in which $A_l\in\ZZ^{3\times 4}$ is highly non-singular 
for each $l$, and $B\in\ZZ^{3\times(s-32)}$. Note that the set of $3\times s$ matrices 
with $s\ge 32$ which fail to be propitious is very thin. Indeed, typical $3\times s$ matrices 
are highly non-singular, and hence also propitious when $s\ge 32$. Finally, given a positive 
number $P$, we denote by $\calN(P)$ the number of integral solutions $\bfx$ of 
(\ref{1.1}) with $|x_j|\le P$ $(1\le j\le s)$.

\begin{theorem}\label{theorem1.1}
Let $s\ge 32$, and suppose that $(a_{ij})\in \ZZ^{3\times s}$ 
is propitious.  Then provided that the system \rf{1.1} has non-singular 
real and $p$-adic solutions for each prime number $p$, one has $\calN(P)\gg P^{s-12}$.
\end{theorem}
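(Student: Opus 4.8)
The plan is to run the Hardy--Littlewood method in the form of a mean value estimate, counting solutions to \rf{1.1} weighted by smooth quartic Weyl sums. Write $N = s-32 \ge 0$ for the number of columns in the block $B$, and for each of the eight blocks $A_l$ use a generating function built from smooth numbers. Specifically, for a suitable exponent $\eta>0$ let $\calA = \calA(P)$ denote the set of $P^\eta$-smooth integers in $[1,P]$, put $f(\bfalp) = \sum_{x\in\calA} e(\alp_1 x^4 + \alp_2 x^4 + \alp_3 x^4)$ adapted to the three-dimensional variable $\bfalp \in \dbT^3$, and form for each $l$ the Weyl sum $f_l(\bfalp)$ attached to the columns of $A_l$ via the linear substitution $\bfalp \mapsto A_l^{\mathsf T}\bfalp$. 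Then $\calN(P)$ is bounded below by
\[
\calR(P) = \int_{\dbT^3} \prod_{l=0}^{7} f_l(\bfalp) \prod_{j\in B} g_j(\bfalp)\, \d\bfalp,
\]
where $g_j$ is the classical (non-smooth) quartic Weyl sum in the $j$-th variable; any lower bound of the expected order $P^{s-12}$ for $\calR(P)$ yields the theorem, since solutions counted by $\calR(P)$ are a subset of those counted by $\calN(P)$.

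First I would establish the major arc contribution. On the arcs $\grM$ of level $P^\nu$ for a small $\nu>0$, the product $\prod_l f_l \prod_j g_j$ is approximated by a product of Gauss sums times a real density factor; using the highly non-singular hypothesis on each $A_l$ and the existence of non-singular real and $p$-adic points, one shows the singular series $\grS$ and singular integral $\grJ$ are positive and convergent, giving $\int_{\grM} \gg P^{s-12}$. Here the rank condition is exactly what is needed to guarantee that the relevant exponential sums in three congruence variables exhibit full cancellation, so that $\grS$ converges once $s\ge 32$ provides enough variables (each highly non-singular quartic block contributes, on average, better than the trivial $12$ variables per equation). I would then dispatch the minor arcs $\grm = \dbT^3\setminus\grM$ by a Hölder-type argument: bound the integrand pointwise by a power of $\sup_{\bfalp\in\grm}|f_l(\bfalp)|$ for one block, absorb the Weyl-sum saving from Weyl's inequality for quartics restricted to smooth numbers, and control the remaining factors in mean square. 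The key auxiliary input here is the new tenth moment estimate for smooth quartic Weyl sums announced in the abstract, which I would apply to a suitable subproduct of the $f_l$; in three dimensions one needs moments in each coordinate direction, and the tenth-moment bound, combined with the eighth block worth of Weyl sums, is precisely tuned so that $\int_{\grm}$ is $o(P^{s-12})$.

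The main obstacle, and the technical heart of the argument, will be the treatment of the intermediate regime where $\bfalp$ is close to a rational point with one or two coordinates of moderate denominator but not all three simultaneously well-approximable — the genuinely multidimensional feature absent in the one-equation case. This is where the arithmetic harmonic analysis of the title enters: rather than estimate $|f_l(\bfalp)|$ pointwise, I would pass to the Fourier expansion of the smooth Weyl sum, expressing $f_l$ as a thin average over major-arc-localised pieces weighted by Fourier coefficients that are only partially arithmetic in nature, and then estimate large values of these coefficients via a Hardy--Littlewood dissection iterated across the three equations. The propitious block structure is engineered so that the eight independent highly non-singular blocks $A_0,\ldots,A_7$ supply enough independent Weyl sums to run this large-values/large-sieve argument, converting a bound on the measure of the set of $\bfalp$ where some $|f_l|$ is large into the required saving; carrying this through with the bookkeeping of three simultaneous rational approximations, and verifying that $32$ variables suffice at every pinch point, is the delicate part.
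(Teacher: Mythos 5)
Your high-level picture — major arcs via standard Gauss sum / density analysis, minor arcs via a sup bound on a small fractional power together with a sharp mean value, the tenth moment estimate as the critical new ingredient — is aimed in roughly the right direction, but the proposal misses the mechanism that makes the minor arcs work, and misdescribes the role of the \emph{arithmetic harmonic analysis}.

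The paper does \emph{not} use Weyl's inequality on minor arcs and does not have a separate intermediate regime. The Hardy--Littlewood dissection is taken with very small denominators $q\le (\log\log P)^{40}$, and the sup bound on the complementary minor arcs is obtained by a Freeman-type argument: if the product $G(\bfalp)=\prod_j g(\Lam_j)$ were as large as $P^s/\log\log P$, every factor would have a good rational approximation and, using the highly non-singular columns, $\bfalp$ itself would lie in a major box. This gives only the logarithmic saving $\sup_{\grn}|G|\ll P^s/\log\log P$, which is why the argument must hinge on a sharp mean value rather than a strong pointwise bound. Concretely,
\[
\int_{\grn} G\,\d\bfalp \ll \Bigl(\sup_{\grn}|G|\Bigr)^{\nu}\int_{[0,1)^3}|G|^{1-\nu}\d\bfalp ,
\]
and the whole weight of the proof is placed on estimating $\int_{[0,1)^3}|G_0|^{1-\nu}$, which by H\"older over the eight blocks reduces to $\int_{[0,1)^3}|g(\Lam_{4l+1})\cdots g(\Lam_{4l+4})|^{8-8\nu}\d\bfalp$.

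The decisive idea — and the sense in which the title phrase \emph{arithmetic harmonic analysis} is meant — is that this three-dimensional fractional moment is \emph{not} handled by three-dimensional dissection, iterated large sieve, or by expanding $g$ into major-arc-localised pieces as you sketch (that is the older ``arithmetic Fourier coefficient'' technique of the cubic forms paper, not what happens here). Instead one observes that
\[
\int_{[0,1)^3} H(\al_1)H(\al_2)H(\al_3)H(-\al_1-\al_2-\al_3)\,\d\bfalp \;=\; \sum_{n\in\ZZ}|c(n)|^4,
\]
where $c(n)$ is the $n$th Fourier coefficient of the periodic function $H$. Taking $H=|g|^{8-\tau}$ converts the correlation into a purely one-dimensional sum over Fourier coefficients $\psi_\tau(n)=\int_0^1|g(\alp)|^{8-\tau}e(-n\alp)\,\d\alp$. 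The fourth moment $\sum_n|\psi_\tau(n)|^4$ is then controlled by large values estimates for $\psi_\tau(n)$, which in turn depend on the eighth, tenth and twelfth moments of $g$. It is at this point — and only here — that $\Del_{10}=0.1991466$ is needed: the method requires $\Del_8+2\Del_{10}<1$, a numerically razor-thin margin that your proposal does not isolate. Without identifying the fourth-moment-of-Fourier-coefficients reduction, your plan to ``control the remaining factors in mean square'' while applying the tenth moment ``to a suitable subproduct of the $f_l$'' does not assemble into a valid bound; in particular, eighth- and tenth-power moments of $g$ alone are far too weak to close the minor arc contribution for a thirty-two-variable system without passing through the $\sum|\psi_\tau(n)|^4$ device.

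Two smaller points: the paper uses smooth Weyl sums $g(\Lam_j)$ for \emph{all} $s$ variables, including the block $B$ (which is then bounded trivially), rather than mixing classical sums into $B$ as you do; and the expression you write for $f(\bfalp)$ — summing $e(\al_1 x^4+\al_2 x^4+\al_3 x^4)$ over a single $x$ — collapses to a one-variable sum in $\al_1+\al_2+\al_3$ and is not the right building block; the generating function must be a product over four \emph{distinct} variables $x_{4l+1},\dots,x_{4l+4}$ with $\Lam_j=\sum_i a_{ij}\al_i$ in the exponent.
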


We remark that \cite[Theorem 1]{ABC} guarantees the existence of a non-zero $p$-adic solution of the Diophantine system \rf{1.1} provided only that 
$s\ge 25$ and $p\ge 2^{16}$. A familiar $p$-adic compactness argument (see \cite[Theorem 4]{DL1969}) allows one to deduce that
for a propitious system the $p$-adic solubility hypothesis in Theorem \ref{theorem1.1} is void for all $p\ge 2^{16}$,
and one may determine whether or not it possesses non-trivial 
integral solutions with a finite computation.

The novel arithmetic harmonic analysis associated with our proof of Theorem \ref{theorem1.1} depends on
the fourth power moment of certain Fourier coefficients. 
For a continuous function $H:\dbR\rightarrow [0,\infty)$ of period $1$, let 
$$c(n)=\int_0^1H(\alp)e(-n\alp)\d\alp,$$
where as usual we write $e(z)$ for $e^{2\pi iz}$. We relate the
correlation
$$
\int_{[0,1)^3}  H(\al_1)H(\al_2)H(\al_3)H(-\al_1-\al_2-\al_3)\d\bfalp 
$$
to the moment $\sum\limits_{n\in\ZZ}|c(n)|^4$,
and bound the latter by using large values estimates for Fourier coefficients.

Choose a number $\del\in[0,1)$, and let 
$$g_\del(\alp;P,R)=\sum_{\substack{x\in \calA(P,R)\\ \del P< x \le P}} e(\alp x^4),$$
where $\calA(P,R)$ denotes the set of numbers $n\in[1,P]$, all of whose prime divisors 
are at most $R$. When there is no doubt about the choice of parameters, we abbreviate 
$g_\del(\alp;P,R)$ to $g(\al)$. We take $R=P^\eta$ and $H(\al)=|g(\al)|^{8-\nu}$, where 
$\eta$ and $\nu$ are sufficiently small 
positive numbers. An application of H\"older's inequality conveys us from the above correlation to the mean value
$$
\int_{[0,1)^3}  |g(\al_1)g(\al_2)|^8 |g(\al_3)g(\al_1+\al_2+\al_3)|^{8-2\nu}\d\bfalp .
$$
Here the presence of even exponents offers the possibility of replacing the smooth Weyl 
sum $g(\al)$ by its classical cousin 
$$f_\del(\alp;P)=\sum_{\del P< x\le P}e(\alp x^4),$$ 
and one perceives the potential for applying the Hardy-Littlewood method to achieve an essentially optimal estimate.
In this way, in \S 5 we obtain  the estimate contained in the following theorem, which 
provides just adequate space for a subsequent application of the circle method to establish Theorem~\ref{theorem1.1}.

\begin{theorem}\label{theorem1.2}
Suppose that $a_i,b_i$ $(1\le i\le 3)$ are non-zero integers, and that $\del\in(0,1)$. 
Then, whenever $\eta$ and $\nu$ are sufficiently small positive numbers and $1\le R\le P^\eta$, one has
$$\int_{[0,1)^3}|g(a_1\al_1)g(a_2\al_2)g(a_3\al_3)g(b_1\al_1+b_2\al_2+b_3\al_3)|^{8-\nu} 
\d\bfalp \ll P^{20-4\nu}.$$
\end{theorem}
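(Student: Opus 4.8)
The plan is to remove the non-integral exponent by H\"older's inequality, reducing to mean values in which all exponents are even positive integers; to replace the smooth Weyl sum $g$ by the classical Weyl sum $f$ in the resulting even-moment mean values (legitimate, since each such mean value counts integral solutions of a system of diagonal quartic equations, and restricting the variables to $R$-smooth numbers can only diminish this count); and finally to estimate the classical mean values by the Hardy--Littlewood method. Write $I$ for the left-hand side of the asserted inequality, and abbreviate the four underlying Weyl sums to $G_1=g(a_1\al_1)$, $G_2=g(a_2\al_2)$, $G_3=g(a_3\al_3)$ and $G_4=g(b_1\al_1+b_2\al_2+b_3\al_3)$. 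From the identity $(G_1G_2G_3G_4)^{8-\nu}=(G_1^8G_2^8G_3^{8-2\nu}G_4^{8-2\nu})^{1/2}(G_1^{8-2\nu}G_2^{8-2\nu}G_3^8G_4^8)^{1/2}$ and Cauchy--Schwarz one obtains $I\le U^{1/2}V^{1/2}$, where
$$U=\int_{[0,1)^3}|G_1G_2|^8|G_3G_4|^{8-2\nu}\d\bfalp,\qquad V=\int_{[0,1)^3}|G_1G_2|^{8-2\nu}|G_3G_4|^8\d\bfalp.$$
It then suffices to show $U\ll P^{20-4\nu+\eps}$ and $V\ll P^{20-4\nu+\eps}$, and afterwards to absorb the $P^\eps$.

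The mean value $U$ is the easy one. Since $G_3=g(a_3\al_3)$ and $G_4=g(b_3\al_3+b_1\al_1+b_2\al_2)$ both depend on $\al_3$, one integrates first over $\al_3$ with $\al_1,\al_2$ fixed and applies Cauchy--Schwarz to the inner integral; after a change of variables this is bounded, uniformly in $\al_1,\al_2$, by $\int_0^1|g(\al)|^{16-4\nu}\d\al$, which by interpolation between the fourteenth and sixteenth moments of the smooth quartic Weyl sum (Hua's inequality) is $\ll P^{12-4\nu+\eps}$. The residual integral over $\al_1,\al_2$ separates as $(\int_0^1|g(\al)|^8\d\al)^2\ll P^{8+\eps}$, so that $U\ll P^{20-4\nu+\eps}$.

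The mean value $V$ is the crux, since here the exponent $8-2\nu$ sits on the \emph{independent} pair $G_1,G_2$ and slicing only saves $2\nu$ (it reduces $V$ to $\int_0^1|g(\al)|^{16}\d\al\cdot(\int_0^1|g(\al)|^{8-2\nu}\d\al)^2\ll P^{20-2\nu+\eps}$). To recover the full saving, apply a further H\"older step to $|G_1G_2|^{8-2\nu}$ via $8-2\nu=6\nu+8(1-\nu)$, obtaining
$$V\le\Bigl(\int_{[0,1)^3}|G_1G_2|^6|G_3G_4|^8\d\bfalp\Bigr)^{\nu}\Bigl(\int_{[0,1)^3}|G_1G_2G_3G_4|^8\d\bfalp\Bigr)^{1-\nu}.$$
Both mean values now carry even exponents; passing from $g$ to $f$ reduces matters to
$$A:=\int_{[0,1)^3}|f(a_1\al_1)f(a_2\al_2)|^6|f(a_3\al_3)f(L)|^8\d\bfalp\ll P^{16+\eps},\qquad B:=\int_{[0,1)^3}|f(a_1\al_1)f(a_2\al_2)f(a_3\al_3)f(L)|^8\d\bfalp\ll P^{20+\eps},$$
whence $V\ll A^\nu B^{1-\nu}\ll P^{16\nu+20(1-\nu)+\eps}=P^{20-4\nu+\eps}$. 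The bound for $B$ is comparatively routine: integrating over $\al_3$ and applying Cauchy--Schwarz bounds the inner integral by $\int_0^1|f(\al)|^{16}\d\al\ll P^{12+\eps}$ (Hua), and the residual factors as $(\int_0^1|f(\al)|^8\d\al)^2\ll P^{8+\eps}$ by the eighth-moment estimate for quartic Weyl sums (for which one may, if preferred, retain the smooth sum, where the bound $\int_0^1|g(\al)|^8\d\al\ll P^{4+\eps}$ is available).

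The estimate $A\ll P^{16+\eps}$ is the principal obstacle, and is where the Hardy--Littlewood method is deployed in earnest: $A$ counts the integral solutions of a system of three diagonal quartic equations in $28$ variables ($6+6$ from the slots $a_1\al_1,a_2\al_2$ and $8+8$ from $a_3\al_3,L$), a count that lies somewhat below the threshold at which the circle method yields an asymptotic formula directly. On the major arcs one obtains the anticipated contribution $\asymp P^{16}$; the minor arcs are where all the tension resides, and it is precisely here that retaining the smooth sum $g(L)$ in place of $f(L)$ pays off, its superior pointwise decay at irrational points furnishing a bound of the shape $g(L)\ll P^{1-\sigma+\eps}$ with $\sigma$ large enough that, after extracting a suitable power of this supremum and controlling the remaining mean value by moment estimates for quartic Weyl sums --- Hua's inequality together with the eighth and, crucially, the (new, sharp) tenth moment --- the minor-arc contribution stays below $P^{16}$. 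The margin here is essentially nil, matching the phrase ``just adequate space'' in the introduction; finally, a modest amount of care is needed to absorb the accumulated $P^\eps$-losses and reach the clean exponent $P^{20-4\nu}$, e.g.\ by taking $\eta$ and $\nu$ small relative to the ambient $\eps$'s or by choosing the H\"older exponents marginally generously.
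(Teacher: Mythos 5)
Your proposal contains a fatal gap. Both your estimate for $U$ and your estimate for $B$ rest on the claim that $\int_0^1|g(\al)|^8\d\al\ll P^{4+\eps}$, equivalently that the residual integral factorises as $\bigl(\int_0^1|g|^8\bigr)^2\ll P^{8+\eps}$. This bound is not available: the eighth moment of the smooth quartic Weyl sum is only known to satisfy $J_8\ll P^{4+\Del_8}$ with $\Del_8\approx 0.594$ (Lemma~\ref{lemma2.1}), and for the classical sum Hua's inequality gives only $\int_0^1|f|^8\ll P^{5+\eps}$. Inserting the true bounds into your argument yields $U\ll P^{12-4\nu+\eps}\cdot P^{8+2\Del_8}$, which exceeds the target $P^{20-4\nu}$ by a factor $P^{2\Del_8}\approx P^{1.19}$, and similarly $B\ll P^{12+\eps}\cdot P^{10+\eps}=P^{22+\eps}$ rather than $P^{20+\eps}$; neither of your two Cauchy--Schwarz branches closes.

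More broadly, the plan of reducing to purely even-moment mean values and then bounding each one in isolation by slicing and one-dimensional moments is precisely what the paper's arithmetic harmonic analysis is designed to circumvent: none of the mean values $A$, $B$, $U$, $V$ admits an essentially optimal bound by such separations with current technology, and $B$ in particular is a $32$-variable problem essentially as hard as Theorem~\ref{theorem1.1} itself. The paper instead exploits the parity of the exponent $8$ by expanding $|g(\cdot)|^8=\sum_{n}\psi_0(n)e(n\cdot)$ as a Fourier series, so that after a single H\"older step (of the same flavour as yours, placing the exponent $8$ on the two $\al_3$-dependent factors) the mean value collapses to the correlation sum $\sum_{n}\psi_0(n)\psi_{2\nu}(nd_1;c_1)\psi_{2\nu}(nd_2;c_2)\psi_0(nd_3;c_3)$. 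This is then controlled by Lemma~\ref{lemma4.2}, which in turn rests on the large-values estimates for the Fourier coefficients $\psi_\tau(n)$ and $\Psi_\grm(n)$ developed in \S\S3--4, and it is there that the new tenth moment of Theorem~\ref{theorem1.3} and the pruning Lemma~\ref{lemmahash} do the work. The saving comes from the correlation between the Fourier coefficients across $n$, a mechanism that is discarded the moment one separates the variables as you propose.
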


Our proof of Theorem \ref{theorem1.2} involves an analysis of the large values of the Fourier coefficients
$$ \int_0^1|g(\alp)|^{8-\nu} e(-n\al)\d\alp, $$
and this is made to depend on a tenth moment of $g(\al)$. Unfortunately, available estimates for this tenth moment
would fall woefully short of the strength required to press the method home. We therefore reconfigure and enhance
earlier analyses of quartic smooth Weyl sums due to Vaughan \cite{Vau1989} and the present authors \cite{BW2000}.  
In this context, we refer to the number $\Del_t$ as an 
{\it admissible exponent} for the positive even integer $t$ if there exists a positive number $\eta$ 
such that, whenever $1\le R\le P^\eta$, one has
\begin{equation}\label{2.1}
\int_0^1|f_0(\alp;P)^2g_0(\alp;P,R)^{t-2}|\d\alp\ll P^{t-4+\Del_t}.
\end{equation}
Note that, in such circumstances, it follows from orthogonality and a consideration of the 
underlying Diophantine equations that
\be{2.1A}\int_0^1|g_0(\alp;P,R)|^t\d\alp\ll P^{t-4+\Del_t}.\ee

\begin{theorem}\label{theorem1.3}
The number $\Delta_{10}=0.1991466$ is an admissible exponent.
\end{theorem}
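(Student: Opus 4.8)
The plan is to establish the admissible exponent $\Del_{10}=0.1991466$ via the iterative, $p$-adic efficient-differencing machinery for smooth quartic Weyl sums, following the architecture of Vaughan \cite{Vau1989} and the authors' earlier paper \cite{BW2000}, but organised so as to squeeze out the improved value. First I would set up the hierarchy of auxiliary mean values: alongside $\Del_t$ one works with exponents measuring the smooth Weyl sum after one or more applications of Weyl differencing combined with the smooth-number factorisation $x=my$ with $m\in[M,MR]$ and $y\in\calA(P/M,R)$. The key recursive inequality relates the tenth moment to a lower even moment (the eighth, whose admissible exponent is known from \cite{BW2000} and subsequent refinements) twisted by a difference polynomial, after extracting the classical Weyl sum $f_0$ on the two distinguished variables. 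Concretely, one bounds $\int_0^1|f_0^2 g_0^{8}|\d\al$ by Hölder from the pointwise output of a differencing step, reducing the quartic $x^4$ to a cubic (then quadratic, then linear) polynomial in the remaining variable with the smooth variables playing the role of parameters.

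The second block of steps is the explicit bookkeeping of the resulting system of inequalities. After a single differencing one is led to estimate a mean value of the shape $\int_0^1 |F(\al)|^2 |g_0(\al;P/M,R)|^{t-2}\d\al$ where $F$ is a classical Weyl sum over a cubic polynomial; this is handled by the available minor-arc estimate for cubic Weyl sums (Weyl's inequality, or Heath-Brown's improvement) together with a major-arc contribution evaluated by the usual singular-series and singular-integral heuristics. The gain at each iteration is tracked through a parameter $\phi$ recording the relative size $M=P^\phi$ of the smooth block pulled out, and one optimises the chain of exponents $\Del_t^{(0)}\to\Del_t^{(1)}\to\cdots$ using the pruning technology of \cite{BW2000} to discard the inflated major arcs cheaply. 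The number $0.1991466$ emerges from optimising these free parameters — the splitting ratios and the number of differencing steps — against the best currently known eighth-moment input; I would present the relevant system of linear constraints and solve it, verifying numerically that the claimed value is attained (and is slightly better than what a naive single pass would give).

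The main obstacle, and the place where genuine new work beyond quoting \cite{Vau1989,BW2000} is required, is the treatment of the second (and possibly third) differencing step: after the first difference the polynomial is cubic with coefficients depending on two smooth parameters $m_1,h_1$, and a second difference produces a quadratic whose leading coefficient involves a product of several smooth variables lying in controlled dyadic ranges. One must count solutions of the associated Diophantine system uniformly in these parameters, which forces a careful analysis of the arithmetic of the difference operator — in particular bounding the number of coincidences among the $m_i h_i$ and handling the divisor-type sums that arise when the quadratic degenerates. This is exactly the "reconfigure and enhance" step alluded to before the theorem: the earlier treatments were not tuned to deliver a tenth moment of this precision, so one must re-examine which mean values to carry through the iteration and re-balance the Hölder exponents so that no single application loses more than the budget allows. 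Once this uniform counting is in place, the remaining passage from the pointwise/mean-value output back to the bound \rf{2.1} with $t=10$, and thence to \rf{2.1A}, is routine.
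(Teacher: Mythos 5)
Your outline captures the correct general flavour — a single application of the fundamental lemma (efficient differencing for smooth sums), followed by Cauchy and H\"older manipulations, a Hardy--Littlewood treatment of an auxiliary Weyl sum, and an iteration to a fixed point — but it misses the one ingredient that actually makes the improved exponent possible, and it proposes a somewhat different processing of the differenced sum than the paper uses.

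The paper does not iterate the Weyl differencing to descend through cubic, quadratic, and linear polynomials in separate stages, nor does it estimate $\int_0^1|F(\al)|^2|g_0|^{t-2}\d\al$ for a cubic Weyl sum via Weyl/Heath--Brown minor arc bounds as you suggest. Instead, after one application of \cite[Lemma 2.3]{Woo1992}, the quantity to control is $T=\int_0^1 F_1(\al)|g_\flat(\al)|^8\d\al$ with $F_1$ a cubic-type sum over the difference polynomial $\Psi(z,h,m)=8hz(z^2+h^2m^8)$. This is processed by Cauchy's inequality, opening the square in $z$ and substituting $z=z_1+z_2$, $l=z_1-z_2$, and then a second Cauchy to split into a genuinely quadratic Weyl sum $D(\al)$ (treated by the major/minor dissection following \cite[Lemma 3.1]{Vau1989}) and a second sum $E(\al)$ whose phase is $8\al l h^3 m^8$. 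It is the treatment of $E(\al)$, not the quadratic sum, where the new content lies: the paper introduces the exponential sum
$$E_0(\al)=\sum_{1\le h\le H}\sum_{1\le l\le 2P}\sum_{M<m_1<m_2\le MR}e(8\al lh^3(m_1^8-m_2^8))$$
and proves the new mean value estimate $\int_0^1|E_0(\al)^2 f_0(\al;2Q)^4|\d\al\ll Q^{5+\eps}$ (Lemma~\ref{EG}), coupling this divisor-structured sum with a classical quartic Weyl sum via Weyl differencing on $f_0^4$ and a mean-square bound for $E_0$ by a divisor argument. This lemma is what saves the budget; without it, feeding the available eighth- and twelfth-moment estimates through your proposed chain falls short of $\Del_8+2\Del_{10}<1$, which is the threshold needed downstream. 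Your ``divisor-type sums'' and ``coincidences among $m_ih_i$'' gesture at the right arithmetic, but they are not formulated as a mean value with $f_0^4$, and that coupling is the whole point.

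Finally, the closing iteration is not an optimisation over the number of differencing steps; the parameter $\tet$ (the size of $M=P^\tet$) is fixed as an explicit function of $(\Del_8,\Del_{10})$, and a single fixed-point iteration of Lemma~\ref{lemma2.4} drives $\Del_{10}$ down to the larger root $\Del_{10}^*$ of the quadratic $2(\Del_{10}^*)^2+(27-3\Del_8)\Del_{10}^*+5-17\Del_8=0$, giving $0.199146547\ldots$ upon inserting $\Del_8=0.594193$. Your proposal would need to produce the analogue of Lemma~\ref{EG}, or an equivalent mechanism, before the claimed numerical value could be reached.
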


We remark that, by applying the methods of Vaughan \cite{Vau1989}, the authors 
\cite{BW2000} obtained the admissible exponent $0.213431$ in place of $0.1991466$, 
this improving on the earlier work of Vaughan \cite{Vau1989, Vau1989b}, which, when appropriately 
combined, delivers the bound \rf{2.1A} in the case $t=10$ with $\Delta_{10}=0.2142036$. For the 
application considered here it is vital
to have at hand admissible exponents $\Delta_8$ and $\Delta_{10}$ with $\Delta_8+ 2\Delta_{10}<1$. In our earlier work
 \cite{BW2000} we showed that $\Delta_8=0.594193$ is admissible. With the numerical value for $\Delta_{10}$
provided by Theorem \ref{theorem1.3}, we obtain  $\Delta_8+ 2\Delta_{10}<0.9925$, leaving barely any space 
to spare in the precision to which we estimate the tenth moment\footnote{Ford \cite{F} 
and Israilov and Allakov \cite{IA} have recorded exponents $\Delta_8$ and $\Delta_{10}$ 
that are smaller than those obtained here. These works are erroneous. See also 
\cite{F2}.}.  

Our basic parameter is $P$, a sufficiently large positive number. In this paper, implicit 
constants in Vinogradov's notation $\ll$ and $\gg$ may depend on $s$ and $\eps$, as well 
as ambient coefficients stemming from  Diophantine systems such as \rf{1.1}. We make 
frequent use of vector notation in the form $\bfx=(x_1,\ldots,x_r)$. Here, the dimension 
$r$ depends on the course of the argument. Occasionally, we abbreviate systems of 
inequalities $0\le a_i\le q$ $(1\le i\le r)$ to $0\le \bfa\le q$, and use $(q,\bfa)$ as a 
shorthand for the largest factor common to the integers $a_1,a_2,\ldots,a_r$ and the 
natural number $q$. Whenever $\eps$ appears in a statement, either implicitly or 
explicitly, we assert that the statement holds for each $\eps>0$. Whenever $R$ appears 
in a statement, it is asserted that there exists a number $\eta>0$ such that this 
statement is true for all $1\le R\le P^\eta$. Whenever $\eps$ occurs in a statement 
involving also $R$, then we allow  $\eta$ to depend on $\eps$. Note that our conventions 
allow us, for example, to conclude that $R^9 \ll P^\eps$. 
 
\section{The tenth moment of smooth quartic Weyl sums} In this section, we shall be 
occupied with the verification of Theorem \ref{theorem1.3}. The new ingredient in our 
treatment is an approach to the exponential sum associated with the difference polynomial
$$ \Psi(z,h,m) = m^{-4} ((z+hm^4)^4 -(z-hm^4)^4) = 8hz(z^2+h^2m^8) $$
that diverges from previous work in several respects. Some notation is required to 
describe the novel features in detail. Whenever $0\le \theta\le 1/4$, we put
$$ M=P^\theta,\quad H= PM^{-4},  \quad Q= PM^{-1}, $$
and introduce the sum
$$ E_0 (\al) = \sum_{1\le h \le H}\sum_{1\le l\le 2P}\sum_{M<m_1<m_2\le MR}
e(8\al lh^3 (m_1^8-m_2^8)). $$
Our first auxiliary lemma supplies an estimate for the mean square of $E_0(\al)$.

\begin{lemma}\label{Esquare} One has
$$\int_0^1 |E_0(\al)|^2\d\al \ll P^{1+\eps}HM^2.$$
\end{lemma}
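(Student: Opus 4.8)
The plan is to read off $\int_0^1|E_0(\al)|^2\d\al$ from orthogonality as the solution count of a Diophantine equation, and then to show that this count is dominated by its diagonal contribution, the tool being an elementary divisor estimate. On expanding $|E_0(\al)|^2$ and integrating over $[0,1)$ (the constant $8$ being irrelevant), one finds that $\int_0^1|E_0(\al)|^2\d\al=N$, where $N$ is the number of integral solutions of
\[ lh^3(m_2^8-m_1^8)=l'h'^3(m_2'^8-m_1'^8) \]
with $1\le h,h'\le H$, $1\le l,l'\le 2P$, $M<m_1<m_2\le MR$ and $M<m_1'<m_2'\le MR$. For $v\ge 1$ let $r(v)$ count the quadruples $(l,h,m_1,m_2)$ in these ranges with $lh^3(m_2^8-m_1^8)=v$; then $N=\sum_{v\ge 1}r(v)^2$, and the diagonal solutions $(l,h,m_1,m_2)=(l',h',m_1',m_2')$ already contribute $\asymp PHM^2R^2$, which is of the size we are aiming for.

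The crux is the bound $r(v)\ll v^\eps$, and here we exploit the factorisation
\[ m_2^8-m_1^8=(m_2-m_1)(m_2+m_1)(m_2^2+m_1^2)(m_2^4+m_1^4). \]
If $lh^3(m_2^8-m_1^8)=v$, then each of these four factors divides $v$; in particular $m_2-m_1$ and $m_2+m_1$ are among the $d(v)$ divisors of $v$, and since these two integers determine $m_1$ and $m_2$, there are at most $d(v)^2$ admissible pairs $(m_1,m_2)$. With $(m_1,m_2)$ fixed, the integer $n=m_2^8-m_1^8$ is determined and divides $v$, so $h^3\mid v/n$, whence $h$ is one of at most $d(v)$ divisors of $v$, and then $l=v/(h^3n)$ is forced. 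Hence $r(v)\le d(v)^3$, and since $v\le 2PH^3(MR)^8$ is bounded by a fixed power of $P$, the familiar estimate for the divisor function yields $r(v)\ll v^\eps\ll P^\eps$ uniformly in $v$.

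It remains to assemble the pieces. Since $\sum_{v\ge 1}r(v)$ merely counts all quadruples $(l,h,m_1,m_2)$ in the relevant ranges,
\[ \sum_{v\ge 1}r(v)\le 2PH\cdot\#\{(m_1,m_2):M<m_1<m_2\le MR\}\ll PHM^2R^2. \]
Combining this with the pointwise bound gives
\[ N=\sum_{v\ge 1}r(v)^2\le\Bigl(\sup_{v\ge1}r(v)\Bigr)\sum_{v\ge1}r(v)\ll P^\eps\cdot PHM^2R^2\ll P^{1+\eps}HM^2, \]
the last step invoking $R^2\ll P^\eps$ in accordance with our conventions. This is the claim.

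The only genuine obstacle is the divisor estimate $r(v)\ll v^\eps$; everything else is orthogonality and crude counting. The point to watch is that the two divisibility conditions $m_2\mp m_1\mid v$ really do pin down $(m_1,m_2)$ — this is the one place where the precise shape $m_i^8$ of the summands enters — while the hypothesis $m_1<m_2$ is needed only to guarantee $m_2^8-m_1^8\ge 1$, so that $v$ is a positive integer and $d(v)$ is at our disposal.
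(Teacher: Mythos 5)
Your proof is correct and follows essentially the same route as the paper: interpret the integral via orthogonality as the solution count of the equation $l_1h_1^3(m_1^8-m_2^8)=l_2h_2^3(m_3^8-m_4^8)$, count one side trivially ($O(PHM^2R^2)$ choices), and then use divisor estimates to bound the number of choices for the other side by $O(P^\eps)$. Your phrasing via $r(v)$ and $\sum r(v)^2\le(\sup r)\sum r$ is the same argument, and you merely spell out the last divisor step (recovering $(m_1,m_2)$ from $m_2^8-m_1^8$ via the factorisation) in slightly more detail than the paper does.
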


\begin{proof} The integral on the left hand side of the proposed estimate is equal to the
number of solutions of the Diophantine equation
$$ l_1 h_1^3(m_1^8-m_2^8) = l_2 h_2^3(m_3^8-m_4^8), $$
in which, for $j=1$ and $2$, the variables are subject to the conditions
$$1\le l_j \le 2P, \quad 1\le h_j\le H,\quad M<m_{2j-1}<m_{2j}\le MR.$$
There are  $O(PHM^2R^2)$ choices for $l_2,h_2,m_3,m_4$, and for each such choice, 
the numbers $l_1$, $h_1$ and $m_1^8-m_2^8$ are divisors of the non-zero integer 
$l_2h_2^3(m_3^8-m_4^8)$. A familiar estimate for the number of divisors now shows 
that the number of choices for $l_1,h_1,m_1$ and $m_2$ is bounded by $O(P^\eps)$, 
and the lemma follows.
\end{proof}

The next lemma is the key to our new tenth moment estimate.

\begin{lemma}\label{EG} One has
$$\int_0^1 |E_0(\al)^2f_0(\al;2Q)^4| \d\al \ll Q^{5+\eps}.$$
\end{lemma}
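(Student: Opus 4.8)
The plan is to bound the integral by orthogonality in terms of a counting problem and then apply a Hua-type or Weyl differencing argument on the quartic Weyl sum $f_0(\al;2Q)$, exploiting the fact that the modulus in $E_0$ carries the heavily divisor-rich structure $lh^3(m_1^8-m_2^8)$ with $l,h$ and the $m_i$ all relatively short (since $M,H,Q$ multiply up to roughly $P$). Expanding $|E_0(\al)^2f_0(\al;2Q)^4|$ via its square-root trick is awkward because $f_0^4$ has odd total degree in the sense of moments, so instead I would first use Cauchy--Schwarz in the form
$$\int_0^1|E_0(\al)^2f_0(\al;2Q)^4|\d\al\le\Bigl(\int_0^1|E_0(\al)|^2|f_0(\al;2Q)|^2\d\al\Bigr)^{1/2}\Bigl(\int_0^1|E_0(\al)|^2|f_0(\al;2Q)|^6\d\al\Bigr)^{1/2},$$
or, more economically, hold one factor of $E_0$ and estimate $\int_0^1|E_0(\al)f_0(\al;2Q)^4|^2\d\al$ against a diagonal-plus-divisor count. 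The cleanest route is probably to keep $E_0(\al)$ squared, open it as a sum over $(l_1,h_1,m_1,m_2)$ and $(l_2,h_2,m_3,m_4)$, write $k=8(l_1h_1^3(m_1^8-m_2^8)-l_2h_2^3(m_3^8-m_4^8))$, and recognise
$$\int_0^1|E_0(\al)^2f_0(\al;2Q)^4|\d\al=\sum_{|k|\ll PHM^8R}r(k)\int_0^1 f_0(\al;2Q)^4 e(k\al)\d\al,$$
after bounding $|f_0^4|$ trivially is too lossy, so one instead applies Cauchy--Schwarz to split off $|E_0|^2$ and is left needing $\int_0^1|f_0(\al;2Q)|^8\d\al\ll Q^{4+\eps}$ together with a bound of the form $\int_0^1|E_0(\al)|^4\d\al\ll(PHM^2)^2 P^\eps Q^{?}$ — but that fourth moment of $E_0$ is not supplied, so this particular pairing fails.

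Accordingly, the workable decomposition is: apply Cauchy--Schwarz to write
$$\int_0^1|E_0(\al)^2f_0(\al;2Q)^4|\d\al\le\Bigl(\int_0^1|E_0(\al)|^2\d\al\Bigr)^{1/2}\Bigl(\int_0^1|E_0(\al)|^2|f_0(\al;2Q)|^8\d\al\Bigr)^{1/2},$$
invoke Lemma \ref{Esquare} for the first factor, and for the second factor expand $|f_0(\al;2Q)|^8$ by Hua's lemma machinery. Concretely, $\int_0^1|E_0(\al)|^2|f_0(\al;2Q)|^8\d\al$ counts solutions of $l_1h_1^3(m_1^8-m_2^8)-l_2h_2^3(m_3^8-m_4^8)=y_1^4+\dots+y_4^4-y_5^4-\dots-y_8^4$ with the $y_i\asymp Q$ and the $E_0$-variables in their ranges; fixing one set $(l_2,h_2,m_3,m_4)$ and four of the $y$'s, the remaining equation $l_1h_1^3(m_1^8-m_2^8)-(y_1^4+\dots)=N$ determines a divisor condition on $l_1h_1^3(m_1^8-m_2^8)$ unless $N=0$. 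The diagonal contribution $N=0$ is controlled by Hua's lemma ($\int_0^1|f_0(\al;2Q)|^8\d\al\ll Q^{4+\eps}$) times $\int_0^1|E_0(\al)|^2\d\al\ll P^{1+\eps}HM^2$, and the off-diagonal contribution is handled by the divisor estimate exactly as in the proof of Lemma \ref{Esquare}. Multiplying out the ranges and using $PM^{-1}=Q$, $PM^{-4}=H$, one checks $P^{1+\eps}HM^2\cdot(\text{second factor})$ collapses to $Q^{10+\eps}$, whence the square root gives $Q^{5+\eps}$.

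The main obstacle I anticipate is the bookkeeping to confirm that the interaction term — the off-diagonal count weighted by divisor functions — does not exceed the diagonal term; this requires that the total length $PHM^8R$ of admissible values of $k=8l h^3(m_1^8-m_2^8)$, divided by $Q^4$, times the number of divisors, genuinely stays below $Q^{6+\eps}$, which is where the constraint $\theta\le 1/4$ (equivalently $M^4\le P$, so $H\ge1$ and $Q\ge M^3$) is used. A secondary subtlety is ensuring that the variables $m_1,m_2$ contribute a true saving: the difference $m_1^8-m_2^8$ with $M<m_1<m_2\le MR$ ranges over roughly $M^8R$ values but each value has few representations, so the divisor bound applies cleanly; one must be slightly careful that $m_1^8-m_2^8\neq0$ always holds, which it does since $m_1<m_2$. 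Once these counting steps are assembled, the estimate follows from Lemma \ref{Esquare}, Hua's lemma, and the standard divisor bound, with no new input required.
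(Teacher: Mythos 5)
Your overall plan---pass to a counting problem and exploit the divisor-rich structure of $E_0$---is sound in spirit, but the decomposition you settle on does not close, and there is a factual error along the way. You invoke ``Hua's lemma ($\int_0^1|f_0(\al;2Q)|^8\d\al\ll Q^{4+\eps}$)'', but Hua's inequality for quartics gives the eighth moment bound $\ll Q^{5+\eps}$, not $Q^{4+\eps}$; the latter is essentially Hooley's Hypothesis K$^*$ and is not available (indeed the admissible exponent $\Del_8\approx 0.594$ in this paper measures how far we are from it). More fundamentally, after applying Cauchy--Schwarz to split off $\bigl(\int_0^1|E_0|^2\d\al\bigr)^{1/2}\ll Q^{1+\eps}$, you need the second factor $\int_0^1|E_0|^2|f_0|^8\d\al$ to be $\ll Q^{8+\eps}$. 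Writing $\rho(n)=\int_0^1|E_0|^2e(\al n)\d\al$ and $\psi(n)=\int_0^1|f_0|^8e(-\al n)\d\al$, both nonnegative by orthogonality, the available information ($\rho(0)\ll Q^{2+\eps}$, $\sum_n\rho(n)=E_0(0)^2\ll Q^{4+\eps}$, $\psi(n)\le\psi(0)\ll Q^{5+\eps}$) gives only $\sum_n\rho(n)\psi(n)\le\psi(0)\sum_n\rho(n)\ll Q^{9+\eps}$, hence $Q^{5.5+\eps}$ overall, falling short by $Q^{1/2}$. The divisor argument you sketch for the off-diagonal does not save: after fixing $(l_2,h_2,m_3,m_4)$ and some of the $y_i$, the remaining $y$-variables are still free, so $l_1h_1^3(m_1^8-m_2^8)=N+y_1^4+\cdots$ is not a divisor condition on a fixed integer; carried through honestly it gives $\ll Q^{10+\eps}$, worse than the trivial bound. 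In fact proving $\int_0^1|E_0|^2|f_0|^8\d\al\ll Q^{8+\eps}$ is essentially equivalent to Lemma \ref{EG} itself (Cauchy--Schwarz runs both ways), so this pairing does not reduce the difficulty.

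The paper proceeds differently: it applies Weyl's differencing technique to obtain $|f_0(\al;2Q)|^4\ll Q^3+Q\sum_{0<|n|\le 32Q^4}c(n)e(\al n)$ with $|c(n)|\ll|n|^\eps$, multiplies by $|E_0(\al)|^2$, and integrates using $\rho(n)\ge 0$ to get $\int_0^1|E_0^2f_0^4|\d\al\ll Q^3\rho(0)+QP^\eps\sum_n\rho(n)$. Then Lemma \ref{Esquare} supplies $\rho(0)\ll P^{1+\eps}HM^2=Q^{2+\eps}$, giving $Q^3\rho(0)\ll Q^{5+\eps}$, while $\sum_n\rho(n)=E_0(0)^2\ll(PHM^2R^2)^2\ll Q^{4+\eps}$ gives $QP^\eps\sum_n\rho(n)\ll Q^{5+\eps}$. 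The point you missed is that Weyl differencing furnishes \emph{pointwise} control of $|f_0|^4$ as $Q^3$ plus a trigonometric polynomial with uniformly small coefficients---strictly sharper information than the $L^1$ bound provided by Hua's lemma---and it is precisely this that meshes with the nonnegativity of $\rho(n)$ to produce the clean bound $Q^{5+\eps}$ without any Cauchy--Schwarz loss.
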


\begin{proof} By Weyl's differencing technique \cite[Lemma 2.3]{Vau1997}, one finds that
$$|f_0(\al;2Q)|^4 \ll Q^3 + Q \sum_{0<|n|\le 32Q^4} c(n) e(\al n), $$
where the coefficients $c(n)$ are certain integers satisfying $c(n)\ll |n|^\eps$. Write
$$\rho(n)=\int_0^1|E_0(\al)|^2e(\al n)\d\al .$$
Then it follows that
$$\int_0^1|E_0(\al)^2f_0(\al;2Q)^4|\d\al \ll Q^3\rho(0)+
Q\sum_{0<|n|\le 32Q^4}c(n)\rho(n).$$
By orthogonality, one has $\rho(n)\ge 0$. Furthermore, Lemma \ref{Esquare} supplies the 
bound $\rho(0)\ll P^{1+\eps}HM^2$. Thus, we deduce that
\begin{align*}
\int_0^1|E_0(\al)^2f_0(\al;2Q)^4|\d\al& \ll Q^3P^{1+\eps}HM^2+QP^\eps 
\sum_{n\in\ZZ} \rho(n)\\
&\ll P^\eps(Q^3PHM^2+QE_0(0)^2)\ll Q^{5+2\eps}.\end{align*}
\end{proof}

\begin{lemma}\label{lemma2.1} The exponents $\Delta_8= 0.594193$ and
 $\Del_{12}=0$ are admissible.
\end{lemma}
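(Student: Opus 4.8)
The plan is not to establish these estimates from scratch, but to quote them from our earlier work \cite{BW2000}, where both exponents emerge from the iterative treatment of smooth quartic Weyl sums pioneered by Vaughan \cite{Vau1989}. The lemma serves only to record the two numerical inputs on which the arguments of this section rest: an admissible exponent $\Delta_8$ comfortably below $0.6$, together with the clean bound $\Delta_{12}=0$.

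To indicate the mechanism underlying these values, I would recall the recursion of \cite{BW2000}. One starts from classical mean value estimates that act as base cases, such as $\int_0^1|f_0(\alpha;P)|^{4}\d\alpha\ll P^{2+\eps}$, $\int_0^1|f_0(\alpha;P)|^{8}\d\alpha\ll P^{5+\eps}$ and Hua's inequality $\int_0^1|f_0(\alpha;P)|^{16}\d\alpha\ll P^{12+\eps}$; since $\int_0^1|f_0(\alpha;P)^2g_0(\alpha;P,R)^{t-2}|\d\alpha\le\int_0^1|f_0(\alpha;P)|^{t}\d\alpha$ by H\"older's inequality, these already furnish admissible exponents, albeit weak ones. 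A single step of efficient differencing then replaces the mean value $\int_0^1|f_0(\alpha;P)^2g_0(\alpha;P,R)^{t-2}|\d\alpha$ by one in which the smooth variables have been compressed to an interval of length $M=P^\theta$, at the cost of an exponential sum over a difference polynomial of the type $\Psi$ considered in this section; because the differenced variables range over smooth sequences, the attendant congruence sums incur no divisor-function loss, so the resulting estimate is itself free of any $\eps$. Iterating this step and optimising the parameter $\theta$ at each stage yields $\Delta_8=0.594193$ when $t=8$, while for $t=12$ enough variables are available that the recursion stabilises at the exponent $t-4$ of the circle-method main term, so that $\int_0^1|f_0(\alpha;P)^2g_0(\alpha;P,R)^{10}|\d\alpha\ll P^{8}$; this is precisely the admissibility of $\Delta_{12}=0$.

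No genuinely new obstacle arises. The one point requiring care is numerical: the application in this section to the tenth moment demands $\Delta_8+2\Delta_{10}<1$ with almost no room to spare, so the quoted value $\Delta_8=0.594193$ must be accurate to the stated precision rather than an optimistic rounding. Verifying this, and the analogous bookkeeping for $\Delta_{12}$, is the only real content here; everything else is carried over from \cite{BW2000}.
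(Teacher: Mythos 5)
Your core move---recording both exponents by citation to prior work rather than rederiving them---is exactly what the paper does, so the approach matches. Two small corrections to your exposition, though. First, the paper sources $\Delta_{12}=0$ not from \cite{BW2000} but from Vaughan's original treatment, specifically \cite[Lemma 5.2]{Vau1989}; only $\Delta_8=0.594193$ comes from \cite[Theorem 2]{BW2000} and the table on page 393 there. Second, your claim that $\int_0^1|f_0(\alpha;P)^2g_0(\alpha;P,R)^{t-2}|\d\alpha\le\int_0^1|f_0(\alpha;P)|^{t}\d\alpha$ follows ``by H\"older's inequality'' is not right: H\"older gives a product of $t$-th moments, not this pointwise-looking comparison, and one cannot argue $|g_0(\alpha)|\le|f_0(\alpha)|$ pointwise either. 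The inequality is nonetheless true for even $t$, but the correct justification is orthogonality together with the observation that the Diophantine solutions counted on the left, having some variables restricted to $\mathcal{A}(P,R)\subseteq[1,P]$, form a subset of those counted on the right---exactly the mechanism the paper invokes in the passage from \eqref{2.1} to \eqref{2.1A}. Since the lemma is a recording of quoted numerics, these are presentational slips rather than gaps, but they should be fixed so the reader is not sent to the wrong reference or handed a spurious appeal to H\"older.
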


\begin{proof} The desired conclusion concerning $\Delta_8$ follows from 
\cite[Theorem 2]{BW2000} and the discussion surrounding the table of exponents on 
\cite[page 393]{BW2000}. Meanwhile, the upper bound (\ref{2.1}) when $t=12$ is a 
consequence of \cite[Lemma 5.2]{Vau1989}.
\end{proof}

We initiate our estimation of the tenth moment by choosing an admissible value for 
$\Delta_{10}$. That such values exist follows from the trivial bounds for $f_0$ and  
$g_0$. For the rest of this section, we work with the sums $g_0(\al;P,R)$ and $f_0(\al;P)$ 
only, and abbreviate these to $g(\al)$ and $f(\al)$, respectively. We put 
$g_\flat(\al)=g_0(\al;2Q,R)$, and for the sake of concision, for positive even integers $t$, we write
$$U_t=\int_0^1|g_\flat(\alp)|^t\d\alp.$$
Further, we require the exponential sum 
$$F_1(\al)=\sum_{1\le h\le H}\sum_{M<m\le MR}\sum_{1\le z\le 2P}
e(8\al hz(z^2+h^2m^8)).$$

\begin{lemma}\label{lemma2.4} Suppose that $\Del_8$ and $\Del_{10}$ are admissible 
exponents satisfying
$$\tfrac{1}{2}<\Del_8<\tfrac{3}{5},\quad \tfrac{1}{10}<\Del_{10}<\tfrac{1}{4}
\quad \text{and}\quad \tfrac{3}{2}\Del_8-\tfrac{5}{7}<\Del_{10}<2\Del_8-
\tfrac{27}{28}.$$
Put
$$\tet=\max\left\{ \frac{3}{17},\frac{7+2\Del_{10}-3\Del_8}{33+2\Del_{10}-3\Del_8}\right\},$$
and define $\Del'_{10}=\Del_8(1-\tet)+4\tet-1$. Then whenever $\Del>\Del'_{10}$, the 
exponent $\Del$ is admissible.
\end{lemma}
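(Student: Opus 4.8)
The plan is to interpolate between the tenth moment and available higher moments by isolating the diagonal contribution after Weyl differencing, and to estimate the off-diagonal contribution using the auxiliary sums $E_0$ and $F_1$ together with Lemma~\ref{EG}. First I would split the smooth variable at level $Q=PM^{-1}$, writing each $x\in\calA(P,R)$ that exceeds $Q$ in the form $x=my$ with $M<m\le MR$ and $y\in\calA(2Q,R)$. This factorisation is the standard device for smooth Weyl sums: it converts $g(\alp)$ into a sum over $m$ of dilated smooth sums $g_\flat(m^4\alp)$, plus an error term arising from the range $x\le Q$ that is easily controlled by the trivial bound and the admissibility of $\Del_{10}$ at the smaller scale. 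The point of the parameter $\tet$ (and hence $M=P^\tet$) is to balance these pieces.

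Next I would bound the tenth moment $\int_0^1|f(\alp)^2g(\alp)^8|\d\alp$ by applying Weyl's differencing to the factor $g(\alp)^2$, which, after the factorisation, produces the difference polynomial $\Psi(z,h,m)=8hz(z^2+h^2m^8)$ appearing in the definition of $F_1$; the resulting exponential sum over $(h,m,z)$ is exactly $F_1(\alp)$. Expanding, the diagonal terms contribute $\ll P^{3+\eps}$ (times the appropriate power of $Q$), and the off-diagonal terms are captured by an integral of the shape $\int_0^1|E_0(\alp)^2 f(\alp;2Q)^{\text{something}}|\d\alp$ after a further application of Cauchy--Schwarz to separate the $z$-sum from the $(h,m)$-sum; here $E_0$ is precisely the piece coming from differencing in $z$ with $z^2+h^2m^8$ playing the role of the differencing variable. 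Lemma~\ref{EG} then gives the bound $Q^{5+\eps}$ for the crucial fourth-power piece, and the remaining fractional power of $g$ is absorbed by H\"older's inequality against the moments $U_8$ and $U_{12}$, which are themselves controlled by the admissibility of $\Del_8$ (via Lemma~\ref{lemma2.1}) and $\Del_{12}=0$. Assembling the exponents, one is led to $\Del'_{10}=\Del_8(1-\tet)+4\tet-1$, where the term $4\tet$ reflects the trivial fourth-moment-type cost of the $(h,m,z)$-variables at scale $M$ and the factor $1-\tet$ the saving transmitted to the residual eighth moment at scale $2Q$.

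The main obstacle I anticipate is the bookkeeping that makes the choice of $\tet$ optimal: the expression $\tfrac{7+2\Del_{10}-3\Del_8}{33+2\Del_{10}-3\Del_8}$ is the crossover point where two competing estimates for the off-diagonal term coincide, and one must verify that the hypotheses $\tfrac12<\Del_8<\tfrac35$, $\tfrac1{10}<\Del_{10}<\tfrac14$, and $\tfrac32\Del_8-\tfrac57<\Del_{10}<2\Del_8-\tfrac{27}{28}$ indeed guarantee both $0\le\tet\le\tfrac14$ (needed so that $M=P^\tet$ lies in the admissible range for the differencing) and that the bound $\Del'_{10}<\Del_{10}$, so that the iteration genuinely improves the exponent and can be bootstrapped. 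In particular the lower constraint $\tfrac32\Del_8-\tfrac57<\Del_{10}$ is what forces $\Del'_{10}$ below the diagonal, and the value $\tfrac3{17}$ in the definition of $\tet$ is the floor imposed by the $\rho(0)\ll P^{1+\eps}HM^2$ term in Lemma~\ref{Esquare}; checking that these numerical inequalities are mutually compatible, and that the error term from the truncation $x\le Q$ does not dominate, is where the real care lies. Everything else is a routine, if intricate, application of H\"older's inequality and orthogonality.
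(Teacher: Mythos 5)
Your sketch correctly identifies the general architecture — the dyadic factorisation $x=my$ that underlies the fundamental lemma, the role of the difference polynomial $\Psi$, the auxiliary sum $F_1$, and the fact that Lemma~\ref{EG} (via $E_0$) controls the off-diagonal contribution — and you are right that the closing bookkeeping is where the constraints on $\Del_8$, $\Del_{10}$ and $\tet$ enter. But the proposal is missing the central structural step of the argument, and one of your diagnostic claims is incorrect.

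The missing step is the two-stage Cauchy--Schwarz that splits $|F_1(\al)|^2$ into a product $D(\al)E(\al)$, where $D(\al)$ is a quadratic Gauss-sum object in the variable $z$ and $E(\al)$ carries the $m^8$-dependence that eventually becomes $E_0$. The term $D(\al)$ is \emph{not} handled by Lemma~\ref{EG} or by $L^2$-orthogonality; it is estimated pointwise by a Hardy--Littlewood dissection following Vaughan's Lemma~3.1 of \cite{Vau1989}, yielding $D(\al)\ll P^{2+\eps}H + P^{3+\eps}H\,\Omega(\al)$ for a major-arc weight $\Omega$ supported on arcs $\grN(q,a)$. This forces the integral $T_1$ apart into a ``global'' piece $T_2$ and a ``major-arc'' piece $T_3$, and the latter needs an additional pruning quantity $W=\int_\grN\Omega(\al)|g_\flat(\al)|^4\,\d\al\ll Q^\eps$ (via \cite[Lemma~2]{B87}) in the H\"older chain. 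Your proposal treats the whole off-diagonal term as if it were a single $E_0$-type quantity, which would not produce the split $\tet = \max\{3/17,\cdot\}$ nor the term $Q^{(4\Del_{10}+1)/8}$ that dominates $T_3$. Relatedly, the H\"older applications use $U_8$, $U_{10}$, \emph{and} $U_{12}$ (the exponent $\Del_{10}$ enters $T_2$, $T_4$, and $T_5$ directly), not just $U_8$ and $U_{12}$ as you suggest.

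On the numerics: your attribution of the floor $\tet\ge 3/17$ to the $\rho(0)\ll P^{1+\eps}HM^2$ term from Lemma~\ref{Esquare} is wrong. That term contributes $Q^3 P^{1+\eps}HM^2 = Q^{5+\eps}$ \emph{identically}, for every $\tet$, so it imposes no constraint. The constraint $\tet\ge 3/17$ actually arises at the final assembly stage: it is needed to ensure $HP^{-1/2}\le 1$ and, in conjunction with $\Del_{10}<2\Del_8-\tfrac{27}{28}$, to guarantee $HQ^{(1+4\Del_{10}-8\Del_8)/8}\le 1$ so that the $T_3$-contribution is dominated. Also, the lemma as stated does not require $\Del'_{10}<\Del_{10}$; that is checked separately in the iteration following the lemma and is not part of its proof.
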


\begin{proof} Our starting point is an application of a suitable version of the {\it fundamental 
lemma}. Thus, as a consequence of \cite[Lemma 2.3]{Woo1992} in combination with the argument of the 
proof of \cite[Lemma 3.1]{Woo1992} (see \cite[Lemma 2.1]{VWfurth1}),
\be{2.5} \int_0^1 |f(\al)^2g(\al)^8|\d\al \ll P^\eps M^7 (PMQ^{4+\Del_8} + T) \ee
where
\be{2.6}
T = \int_0^1 F_1(\al) |g_\flat(\al)|^8\d\al. \ee

\par By Cauchy's inequality, 
$$|F_1(\al)|^2 \le HMR \sum_{1\le h \le H}\sum_{M<m\le MR}
\bigg| \sum_{1\le z\le 2P}e(\al\Psi(z,h,m))\bigg|^2.$$
Here, we open the square  and rewrite it as a double sum over $z_1$ and $z_2$, say. The 
substitutions $z=z_1+z_2$ and $l=z_1-z_2$ then yield 
$$\bigg| \sum_{1\le z\le 2P}e(\al\Psi(z,h,m))\bigg|^2=\sum_{|l|\le 2P} 
\sum_{z\in\calB(l)}e(\al hl(6z^2+2l^2 + 8h^2m^8)),$$
in which $\calB(l)$ denotes the set of all integers $z$ with  $1\le z\pm l\le 4P$ and  
$z\equiv l\bmod 2$. Separation of the term $l=0$ delivers the inequality
$$|F_1(\al)|^2\ll P^{1+\eps}H^2M^2+P^\eps HM\sum_{\substack{1\le h\le H\\ 1\le l\le 2P}}
\bigg|\sum_{M<m\le MR} \sum_{z\in\calB(l)}
e(\al hl(6z^2+8h^2m^8))\bigg|.$$
Yet another application of Cauchy's inequality now produces the bound
$$|F_1(\al) |^2\ll P^{1+\eps}H^2M^2+P^\eps HM\big(D(\al)E(\al)\big)^{1/2},$$
in which
$$D(\al) = \sum_{1\le h\le H}\sum_{1\le l\le 2P} \bigg| \sum_{z\in\calB(l)}
e(6\al hlz^2)\bigg|^2$$
and
$$ E(\al) = \sum_{1\le h\le H} \sum_{1\le l\le 2P} \bigg| \sum_{M<m\le MR}
e(8\al lh^3m^8)\bigg|^2. $$
On substituting the last inequality for $|F_1(\al)|^2$ into \rf{2.6}, we infer that
\be{2.7} T\ll P^{1/2+\eps}HMQ^{4+\Del_8} + P^\eps(HM)^{1/2} T_1,\ee
where
\be{2.8} T_1 = \int_0^1 \big( D(\al)E(\al)\big)^{1/4} |g_\flat(\al)|^8\d\al .\ee

We apply the Hardy-Littlewood method to estimate $T_1$. For integers $a,q$ with 
$0\le a\le q\le P$ and $(a,q)=1$, let  $\grN(q,a)$ denote the set of all $\al\in[0,1)$
with $|q\al-a|\le PQ^{-4}$, and let $\grN$ denote the union of these intervals. Note 
that this union is disjoint. Define the function $\Omega: [0,1) \to [0,1]$  by  
$$ \Omega(\al) = (q + Q^4 |q\al-a|)^{-1}  \quad (\al\in \grN(q,a)), $$
and put $\Omega(\al)=0$ when $\al\not\in\grN$. 

By Dirichlet's theorem on Diophantine approximation, whenever $\al\in[0,1)$, there are 
 integers $a,q$ with $0\le a\le q\le Q^4P^{-1}$,  $(a,q)=1$ and $|q\al-a|\le PQ^{-4}$. 
 Moreover, although our sum $D(\al)$ differs in detail from that used by Vaughan 
\cite{Vau1989} in his equation (3.2), the proof of \cite[Lemma 3.1]{Vau1989} applies to 
our sum as well and yields the same estimate. We therefore conclude that the bound
$$ D(\al) \ll P^{2+\eps}H + P^{3+\eps}H \Omega(\al) $$ 
holds for all $\al\in[0,1)$. Consequently, we deduce from \rf{2.8} that
\be{2.9} T_1 \ll
 P^{1/2+\eps}H^{1/4} T_2 + P^{3/4+\eps}H^{1/4}T_3,\ee
where
$$ T_2 = \int_0^1 E(\al)^{1/4} |g_\flat(\al)|^8\d\al \quad \text{and}\quad T_3= \int_\grN 
\big(\Omega(\al) E(\al)\big)^{1/4} |g_\flat(\al)|^8\d\al. $$

The estimation of $T_2$ will involve the application of Lemma \ref{EG}. An inspection of the 
definitions of $E_0(\al)$ and $E(\al)$ reveals that
\begin{equation}\label{2.fix}
E(\al)\ll |E_0(\al)|+PHMR.
\end{equation}
As a first bound for $T_2$, we then have
$$T_2\ll P^\eps (PHM)^{1/4}U_8+\int_0^1 |E_0(\al)|^{1/4}|g_\flat(\al)|^8\d\al.$$
Let
$$V=\int_0^1 |E_0(\al)^{2}g_\flat(\al)^4|\d\al.$$
Then, a further application of H\"older's inequality yields the bound
$$ T_2 \ll P^\eps(PHM)^{1/4}U_8+U_8^{5/8}U_{10}^{1/4}V^{1/8}. $$
We infer from Lemma \ref{EG} via orthogonality that
\begin{equation}\label{vest}
V\ll (PHM^2)^2Q^{1+\eps},
\end{equation} 
and so by applying \rf{2.1A}, we deduce that
$$P^{1/2}H^{1/4}T_2\ll P^{3/4+\eps}(HM)^{1/2}Q^4
\Big( M^{-1/4}Q^{\Delta_8}+Q^{(5\Delta_8+2\Delta_{10}+1)/8}\Big).$$
However, the hypothesis $\Del_{10}>\tfrac{3}{2}\Del_8-\tfrac{5}{7}$ ensures that
$$\frac{3\Del_8-2\Del_{10}-1}{3\Del_8-2\Del_{10}+1}<
\frac{7+2\Del_{10}-3\Del_8}{33+2\Del_{10}-3\Del_8}.$$
Hence we have
$$\tet>\frac{3\Del_8-2\Del_{10}-1}{3\Del_8-2\Del_{10}+1},$$
so that
$$M^2\ge Q^{3\Del_8-2\Del_{10}-1}.$$
We thus conclude that
\be{2.12}
P^{1/2}H^{1/4}T_2\ll P^{3/4+\eps}(HM)^{1/2}Q^{4+(5\Delta_8+2\Delta_{10}+1)/8}.
\ee

As our first step in estimating $T_3$, we apply (\ref{2.fix}) to deduce that
$$T_3\ll T_4+(PHMR)^{1/4}T_5,$$
where
$$T_4=\int_\grN \left( \Ome(\alp)|E_0(\alp)|\right)^{1/4}|g_\flat(\alp)|^8\d\alp \quad \text{and}\quad 
T_5=\int_\grN \Ome(\alp)^{1/4}|g_\flat(\alp)|^8\d\alp .$$
Write
$$W=\int_\grN \Ome(\alp)|g_\flat(\alp)|^4\d\alp .$$
Then an application of \cite[Lemma 2]{B87} confirms the estimate
$$W\ll Q^{\eps-4}(PQ^2+Q^4)\ll Q^\eps.$$
H\"older's inequality therefore combines with (\ref{2.1A}), (\ref{vest}) and Lemma \ref{lemma2.1} 
to give
\begin{align*}
T_4&\le V^{1/8}W^{1/4}U_{10}^{1/2}U_{12}^{1/8}\\
&\ll P^\eps \left( (PHM^2)^2Q\right)^{1/8}(Q^{6+\Del_{10}})^{1/2}(Q^8)^{1/8},
\end{align*}
whence
\begin{equation}\label{2.T4}
P^{3/4+\eps}H^{1/4}T_4\ll P^{1+\eps}(HM)^{1/2}Q^{4+(4\Del_{10}+1)/8}.
\end{equation}
In like manner, another application of H\"older's inequality yields the bound
$$T_5\ll W^{1/4}U_8^{1/4}U_{10}^{1/2}\ll P^\eps (Q^{4+\Del_8})^{1/4}
(Q^{6+\Del_{10}})^{1/2},$$
whence
\begin{equation}\label{2.T5}
P^{1+\eps}H^{1/2}M^{1/4}T_5\ll P^{1+\eps}(HM)^{1/2}Q^4
\left( M^{-1/4}Q^{(\Del_8+2\Del_{10})/4}\right).
\end{equation}
By combining (\ref{2.T4}) and (\ref{2.T5}), we conclude that
$$P^{3/4}H^{1/4}T_3\ll P^{1+\eps}(HM)^{1/2}Q^4\left( Q^{(4\Del_{10}+1)/8}+M^{-1/4}
Q^{(\Del_8+2\Del_{10})/4}\right).$$
The hypotheses of the statement of the lemma imply that
$$\tet\ge \frac{3}{17}>\frac{1}{11}\ge \frac{2\Del_8-1}{2\Del_8+1},$$
so that $M\ge Q^{\Del_8-1/2}$. Thus we conclude that
\begin{equation}\label{2.T6}
P^{3/4}H^{1/4}T_3\ll P^{1+\eps}(HM)^{1/2}Q^{4+(4\Del_{10}+1)/8}.
\end{equation}

We may now collect together our various estimates, first combining \rf{2.9}, \rf{2.12} 
and (\ref{2.T6}), and substituting the result into (\ref{2.7}) to obtain the bound
$$T\ll P^{1+\eps}MHQ^{4+\Del_8} \big(P^{-1/2}+P^{-1/4}Q^{(1+2\Del_{10}-3\Del_8)/8}
+Q^{(1+4\Del_{10}-8\Del_8)/8}\big).$$
Since $\tet\ge \frac{3}{17}>\frac{1}{8}$, one has $HP^{-1/2}\le 1$, and the bound 
$$HP^{-1/4}Q^{(1+2\Del_{10}-3\Del_8)/8}\le 1$$
follows in its turn from the hypothesis that 
$$\tet\ge \frac{7+2\Del_{10}-3\Del_8}{33+2\Del_{10}-3\Del_8}.$$
Meanwhile, since we suppose that $\Del_{10}<2\Del_8-\tfrac{27}{28}$, one finds from 
the hypothesis $\tet\ge \frac{3}{17}$ that 
$$HQ^{(1+4\Del_{10}-8\Del_8)/8}<HQ^{-5/14}\le 1.$$
We therefore deduce that $T\ll P^{1+\eps}MQ^{4+\Del_8}$, and on substituting into \rf{2.5}, we 
obtain the bound
$$\int_0^1|f(\al)^2g(\al)^8|\d\al \ll P^{1+\eps} M^8Q^{4+\Delta_8}= 
P^{6+\Del'_{10}+\eps},$$
where $\Delta'_{10}=\Del_8(1-\theta)+4\theta -1$. It follows that whenever $\Del>\Del'_{10}$, then 
$\Del$ is admissible, and so the proof of the lemma is complete.
\end{proof}

We are now equipped to describe the iteration that yields the admissible exponent recorded in 
Theorem \ref{theorem1.3}. We recall from Lemma \ref{lemma2.1} that the exponent 
$\Del_8=0.594193$ is admissible. Also, from the work of Vaughan \cite{Vau1989} and the authors 
\cite{BW2000}, there exists an admissible exponent $\Del_{10}$ smaller than $0.22$. 
Suppose then 
that an admissible exponent $\Del_{10}$ has been established satisfying
$$0.2241\ldots =2\Del_8-\tfrac{27}{28}>\Del_{10}>
\tfrac{3}{2}\Del_8-\tfrac{5}{7}=0.1770\ldots .$$
It follows that Lemma \ref{lemma2.4} then applies with
$$\tet=\frac{7+2\Del_{10}-3\Del_8}{33+2\Del_{10}-3\Del_8},$$
and that any exponent $\Del'_{10}$ exceeding $\Del_8(1-\tet)+4\tet-1$ is also admissible. 
On iterating this treatment, one finds a decreasing sequence of admissible exponents converging 
to the larger root $\Delta_{10}^*$ of the equation 
$$ \Delta^*_{10}  = \Del_8 - 1 + (4-\Del_8) \frac{7  + 2\Delta^*_{10}-3 \Delta_8}{ 33+ 
 2 \Delta^*_{10}-3 \Delta_8}. $$
On using the value for $\Del_8$ recorded in Lemma \ref{lemma2.1}, one readily confirms that 
$\Del_{10}^*$ satisfies the equation
$$2(\Del_{10}^*)^2+(27-3\Del_8)\Del_{10}^*+5-17\Del_8=0,$$
whence
$$ \Del^*_{10}=\tfrac14 \Big(3\Del_8-27+\sqrt{689-26\Del_8+9 \Del_8^2}\Big) = 
0.199146547\ldots .$$
Given any positive number $\del$, this iteration yields an admissible exponent $\Del_{10}^\dagger$, 
satisfying $\Del_{10}^*<\Del_{10}^\dagger<\Del_{10}^*+\del$, after a number of iterations 
bounded solely in terms of $\del$. Consequently, keeping in mind our conventions concerning 
$\eps$ and $R$, it follows that
$$\int_0^1|f(\alp)^2g(\alp)^8|\d\alp \ll P^{6+\Del^*_{10}+\eps}.$$
We deduce that the exponent $\Del_{10}$ is admissible whenever $\Del_{10}>\Del_{10}^*$, 
and thus we arrive at the conclusion of Theorem \ref{theorem1.3}. 

\section{Large values estimates} Our next task is to provide a proof of the mixed fractional moment 
estimate recorded in Theorem \ref{theorem1.2}. Within this and the next two sections, we fix a choice 
of $\del\in(0,1)$ once and for all, and then adumbrate $f_\del(\al;P)$ to $f(\al)$ and $g_\delta(\al;P,R)$
to $g(\al)$. Finally, according to Theorem \ref{theorem1.3} and Lemma \ref{lemma2.1}, we are at liberty 
to suppose that $\Del_8$ and $\Del_{10}$ are admissible exponents satisfying the inequalities
$$ \Del_8 \le 0.594193 \quad \text{and} \quad \Del_{10}\le 0.1991466.$$ 

When $0\le \tau\le 1$, we define the Fourier coefficient
\begin{equation}\label{3.1}
\psi_\tau(n)=\int_0^1|g(\alp)|^{8-\tau}e(-n\alp)\d\alp .
\end{equation}
Also, when $T>0$, we write
\begin{equation}\label{Mtau}
M_\tau(T)=\sum_{\substack{|n|\le P^5\\ T<|\psi_\tau(n)|\le 2T}}|\psi_\tau(n)|^4.
\end{equation}
By applying the triangle inequality to (\ref{3.1}) in combination with H\"older's inequality, one 
obtains the bound
$$\psi_\tau(n)\le \psi_\tau(0)\le \biggl( \int_0^1|g(\alp)|^8\d\alp\biggr)^{1-\tau/8}
\ll P^{4+\Del_8},$$
and thus we may restrict attention to values of $T$ with $T\le P^5$.\par

We now seek to bound $M_\tau(T)$ when $1\le T\le P^5$. Define $\calZ_T$ to be 
the set of integers $n$ with $|n|\le P^5$ such that $T<|\psi_\tau(n)|\le 2T$, and write 
$Z_T=\text{card}(\calZ_T)$. For each $n\in \calZ_T$, we take $\ome_n=1$ when 
$\psi_\tau (n)>0$, 
and $\ome_n=-1$ when $\psi_\tau(n)<0$, and then define
$$K_T(\alp)=\sum_{n\in \calZ_T}\ome_ne(-n\alp).$$
Thus we have
\begin{align}
\int_0^1|g(\alp)|^{8-\tau}K_T(\alp)\d\alp &=\sum_{n\in \calZ_T}\ome_n\int_0^1
|g(\alp)|^{8-\tau}e(-n\alp)\d\alp \notag \\
&=\sum_{n\in \calZ_T}|\psi_\tau(n)|>TZ_T.\label{3.2}
\end{align}

\par Before announcing our basic large values estimates, we recall that as an immediate 
consequence of \cite[Lemma 2.1]{KW2010}, one has
\begin{equation}\label{3.3}
\int_0^1|g(\alp)^4K_T(\alp)^2|\d\alp \le\int_0^1|f(\alp)^4K_T(\alp)^2|\d\alp  \ll 
P^3Z_T+P^{2+\eps}Z_T^{3/2}.
\end{equation}
Finally, we introduce the exponents
\begin{equation}\label{kap1}
\kap_1(\tau)=11-(2-\Del_{10})\tau
\end{equation}
and
\begin{equation}\label{kap2}
\kap_2(\tau)=19+\Del_8+2\Del_{10}-(4-2\Del_8+2\Del_{10})\tau .
\end{equation}

\begin{lemma}\label{lemma3.2} Let $0<\tau\le 1$ and $1\le T\le P^5$.  Then one has
$$Z_T\ll P^\eps (P^{\kap_r(\tau)}T^{-2r}+P^{2\kap_r(\tau)-2}T^{-4r})\quad (r=1,2).$$
\end{lemma}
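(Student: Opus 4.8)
The plan is to bound $Z_T$ by relating the large-values set $\calZ_T$ to mean values of $g$ through the dual sum $K_T(\alp)$, and then feeding in the admissible exponents $\Del_8$ and $\Del_{10}$ together with the estimate \rf{3.3} for $\int_0^1|f(\alp)^4K_T(\alp)^2|\d\alp$. Starting from the lower bound $TZ_T<\int_0^1|g(\alp)|^{8-\tau}K_T(\alp)\d\alp$ in \rf{3.2}, I would apply H\"older's inequality to split the weight $|g(\alp)|^{8-\tau}$ into factors that can be estimated separately: one factor carrying enough copies of $|g|$ to invoke the tenth moment bound \rf{2.1A} (with exponent $\Del_{10}$) or the eighth moment bound (with exponent $\Del_8$), and one factor pairing $|g(\alp)|^{4/r}$ against $K_T(\alp)^{2/r}$ so that after H\"older the quantity $\int_0^1|g(\alp)^4K_T(\alp)^2|\d\alp$ appears, which is controlled by \rf{3.3}. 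The two cases $r=1$ and $r=2$ correspond to taking that second factor to the first or second power, i.e. using $\int|g^4K_T^2|$ once versus twice in the H\"older decomposition.

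Concretely, for $r=1$ I would write $|g|^{8-\tau}=|g|^{8-\tau-a}\cdot|g|^{a}$ and pair $K_T$ with the $|g|^a$ part; choosing $a$ so that $\int |g|^a K_T^2$ matches the form in \rf{3.3} (so $a=4$ after a further H\"older split on that piece, since \rf{3.3} has $K_T^2$ but \rf{3.2} only has $K_T^1$), and the leftover $|g|^{8-\tau-4}=|g|^{4-\tau}$ gets estimated by interpolating between the fourth and eighth (or tenth) moments via H\"older, introducing $\Del_8$ and $\Del_{10}$. Collecting the exponents of $P$ and of $T$ and of $Z_T$, one solves for $Z_T$; the bookkeeping is designed precisely so that the exponent of $P$ comes out as $\kap_1(\tau)=11-(2-\Del_{10})\tau$ and the power of $T$ as $T^{-2}$, with the $T^{-4}$, $P^{2\kap_1-2}$ term arising from the $P^{2+\eps}Z_T^{3/2}$ contribution in \rf{3.3} after rearrangement. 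For $r=2$ one uses the $\int|g^4K_T^2|$ estimate to a higher power, which forces more copies of $|g|$ into the high-moment factor; this is why $\kap_2(\tau)$ carries the heavier $\Del_8+2\Del_{10}$ combination, reflecting that a tenth moment and an eighth moment both get invoked.

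The main obstacle I anticipate is the exponent arithmetic: one must choose the H\"older exponents so that (i) the total power of $|g|$ used is exactly $8-\tau$ plus whatever extra copies are needed to absorb $K_T$, (ii) only the admissible moments at $t=8$ and $t=10$ (and trivially $t=4$, via the classical $L^2$ bound $\int|g|^4\ll P^{\eps}\cdot P^{?}$, actually $\int_0^1|f(\alp)|^4\d\alp\ll P^{2+\eps}$) are invoked — never an unestablished moment — and (iii) the resulting power of $P$ collapses to the stated $\kap_r(\tau)$. Getting the dependence on $\tau$ linear and with the precise slopes $2-\Del_{10}$ and $4-2\Del_8+2\Del_{10}$ requires distributing the $\tau$-saving carefully across the high-moment factor, since each unit drop in the exponent of $|g|$ there changes the $P$-power by the marginal moment exponent. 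Once the decomposition is pinned down, verifying the two displayed inequalities is a routine substitution, but finding the decomposition that lands exactly on \rf{kap1} and \rf{kap2} — rather than something slightly weaker — is where the real care lies, and it is presumably reverse-engineered from the needs of the subsequent sections.
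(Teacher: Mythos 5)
Your overall strategy is the one the paper uses: start from \rf{3.2}, apply H\"older to break $|g(\alp)|^{8-\tau}K_T(\alp)$ into a factor $\int_0^1|g^4K_T^{2r}|\,\d\alp$ plus moment integrals, feed in \rf{3.3} and the admissible exponents, and disentangle. That is the correct skeleton, and your intuition about why the $T^{-4r}$, $P^{2\kap_r-2}$ term arises from the $Z_T^{3/2}$ contribution in \rf{3.3} is also right.

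There are, however, some concrete gaps in the decomposition you sketch. First, after extracting $I_1^{1/2}=\big(\int_0^1|g^4K_T^2|\,\d\alp\big)^{1/2}$ by H\"older (using exponents $2,2$), the surviving power of $|g|$ in the remaining integral is $6-\tau$, not $4-\tau$: the square root of $|g|^4K_T^2$ contributes only $|g|^2K_T$ back to the pointwise product, so the full bound is $I_1^{1/2}\big(\int_0^1|g|^{12-2\tau}\,\d\alp\big)^{1/2}$. Second, and more importantly, the interpolation of that leftover moment is \emph{not} between the fourth and eighth (or tenth) moments: the paper interpolates $\int|g|^{12-2\tau}$ between $J_{10}$ and $J_{12}$, and $J_{12}\ll P^8$ rests on the admissible exponent $\Del_{12}=0$ from Lemma \ref{lemma2.1}, which your plan never invokes. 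Without the twelfth moment the exponent arithmetic for $r=1$ cannot land on $\kap_1(\tau)=11-(2-\Del_{10})\tau$. Third, for $r=2$ you need $I_2=\int_0^1|g^4K_T^4|\,\d\alp$; the paper reduces this to the $r=1$ quantity by the trivial bound $|K_T(\alp)|\le Z_T$, giving $I_2\ll P^3Z_T^3+P^{2+\eps}Z_T^{7/2}$. You gesture at this by saying the estimate is used ``to a higher power,'' but \rf{3.3} itself does not supply a bound for $\int|g^4K_T^4|$, so this step needs to be made explicit. Finally, the H\"older split for $r=2$ pairs $I_2^{1/4}$ with $J_8^{(1+2\tau)/4}J_{10}^{(1-\tau)/2}$ (not $J_{10}$ and $J_{12}$), which is where the $\Del_8+2\Del_{10}$ combination in $\kap_2(\tau)$ actually comes from; you correctly anticipated the appearance of both $\Del_8$ and $\Del_{10}$ here, but without pinning down these exponents the claim that the bookkeeping ``collapses to $\kap_r(\tau)$'' is not yet a proof.
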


\begin{proof} In the looming discussion we drop mention of $T$ and $\tau$ from our 
various notations. When $r\in\{1,2\}$ and $s\in\NN$ is even, define
$$I_r=\int_0^1|g(\alp)^4K(\alp)^{2r}|\d\alp\quad \text{and}\quad 
J_s=\int_0^1|g(\alp)|^s\d\alp .$$
As an immediate consequence of \rf{2.1A}, Lemma \ref{lemma2.1} and Theorem 
\ref{theorem1.3}, one has
\begin{equation}\label{3.4}
J_8\ll P^{4+\Del_8},\quad J_{10}\ll P^{6+\Del_{10}}\quad \text{and}\quad 
J_{12}\ll P^8.
\end{equation}
Then an application of H\"older's inequality shows in the first instance that
$$\int_0^1|g(\alp)|^{8-\tau}K(\alp)\d\alp \le I_1^{1/2}J_{10}^{\tau/2}
J_{12}^{(1-\tau)/2},$$
and by means of \rf{3.2}, \rf{3.3} and \rf{3.4}, we infer the bound
\begin{align*}
TZ<\int_0^1|g(\alp)|^{8-\tau}K(\alp)\d\alp &\ll P^\eps(P^3Z+P^2Z^{3/2})^{1/2}
(P^{6+\Del_{10}})^{\tau/2}(P^8)^{(1-\tau)/2}\\
&\ll P^\eps \left( (P^{\kap_1(\tau)}Z)^{1/2}+(P^{2\kap_1(\tau)-2}Z^3)^{1/4}\right) .
\end{align*}
The claimed estimate with $r=1$ follows on disentangling this bound.\par

Meanwhile, another application of H\"older's inequality yields
$$\int_0^1|g(\alp)|^{8-\tau}K(\alp)\d\alp \le I_2^{1/4}J_8^{(1+2\tau)/4}
J_{10}^{(1-\tau)/2}.$$
Hence, by applying a trivial estimate for $K(\alp)$ in combination with (\ref{3.2}), 
(\ref{3.3}) and (\ref{3.4}), we infer that
\begin{align*}
TZ&\ll P^\eps (P^3Z^3+P^2Z^{7/2})^{1/4}(P^{4+\Del_8})^{(1+2\tau)/4}
(P^{6+\Del_{10}})^{(1-\tau)/2}\\
&\ll P^\eps \left( (P^{\kap_2(\tau)}Z^3)^{1/4}+(P^{2\kap_2(\tau)-2}Z^7)^{1/8}\right).
\end{align*}
The claimed estimate with $r=2$ follows on disentangling this bound.
\end{proof}

We require large values estimates of similar type for related mean values associated with 
a restriction to a set of minor arcs. Define the major arcs $\grM$ to be the union of the 
intervals
\begin{equation}\label{Major}
\grM(q,a)=\{ \alp\in [0,1):|q\alp-a|\le P^{-7/2}\},
\end{equation}
with $0\le a\le q\le P^{1/2}$ and $(a,q)=1$, and then put $\grm=[0,1)\setminus \grM$. 
When $\grB\subseteq [0,1)$ is measurable, we define
$$\Psi_\grB(n)=\int_\grB |f(\alp)^2g(\alp)^6|e(-n\alp)\d\alp ,$$
and when $T>0$, define
\begin{equation}\label{Mzero}
M_0(T)=\sum_{\substack{|n|\le P^5\\ T<|\Psi_\grm(n)|\le 2T}}|\Psi_\grm(n)|^4.
\end{equation}
Define $\calZ_0$ to be the set of integers $n$ with $|n|\le P^5$ for which 
$T<|\Psi_\grm(n)|\le 2T$, and write $Z_0=Z_0(T)$ for $\text{card}(\calZ_0)$. For each 
$n\in \calZ_0$, we take $\ome_n=1$ when $\Psi_\grm(n)>0$, and we put $\ome_n=-1$ when 
$\Psi_\grm(n)<0$. Also, 
we define
$$K_0(\alp)=\sum_{n\in \calZ_0}\ome_ne(-n\alp).$$
Then, as in \rf{3.2}, one obtains
\begin{equation}
\int_\grm |f(\alp)^2g(\alp)^6|K_0(\alp)\d\alp =\sum_{n\in \calZ_0}|\Psi_\grm(n)|>TZ_0.\label{3.5} 
\end{equation}
Before announcing our large values estimates for $\Psi_\grm(n)$, we recall the definitions 
(\ref{kap1}) and (\ref{kap2}) of $\kap_1(\tau)$ and $\kap_2(\tau)$. 

\begin{lemma}\label{lemma3.3} Let $1\le T\le P^5$. Then one has
$$Z_0\ll P^\eps (P^{\kap_1(0)+\Del_{10}-1/4}T^{-2}+P^{2\kap_1(0)+2\Del_{10}-5/2}
T^{-4})$$
and
$$Z_0\ll P^\eps (P^{\kap_2(0)}T^{-4}+P^{2\kap_2(0)-2}T^{-8}).$$
\end{lemma}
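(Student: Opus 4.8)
The plan is to run the argument of Lemma~\ref{lemma3.2} again, the starting point now being the lower bound \rf{3.5}, so that $TZ_0<\int_\grm|f(\alp)^2g(\alp)^6K_0(\alp)|\d\alp$ with $|K_0(\alp)|\le Z_0$. The bound \rf{3.3} remains available verbatim with $K_T,Z_T$ replaced by $K_0,Z_0$, as do the moment estimates \rf{3.4}. The one genuinely new input is the hybrid mean value
$$\int_0^1|f(\alp)^2g(\alp)^8|\d\alp\ll P^{6+\Del_{10}},$$
which follows from Theorem~\ref{theorem1.3} and \rf{2.1} with $t=10$: the Diophantine system counted on the left for the truncated sums $f=f_\del$ and $g=g_\del$ is a sub-system of that counted by $\int_0^1|f_0(\alp;P)^2g_0(\alp;P,R)^8|\d\alp$, whence the claimed bound.

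For the first estimate I would split by H\"older's inequality as
$$\int_\grm|f^2g^6K_0|\d\alp\le\Bigl(\int_0^1|g^4K_0^2|\d\alp\Bigr)^{1/2}\Bigl(\int_\grm|f^4g^8|\d\alp\Bigr)^{1/2},$$
bounding the first factor by $P^3Z_0+P^{2+\eps}Z_0^{3/2}$ via \rf{3.3}. For the second factor the crucial point is the minor arc estimate $f(\alp)\ll P^{7/8+\eps}$, valid for all $\alp\in\grm$; granting this,
$$\int_\grm|f^4g^8|\d\alp\le\Bigl(\sup_{\alp\in\grm}|f(\alp)|\Bigr)^2\int_0^1|f^2g^8|\d\alp\ll P^{7/4+\eps}P^{6+\Del_{10}}=P^{8+\Del_{10}-1/4+\eps}.$$
Feeding these into \rf{3.5} yields $TZ_0\ll P^\eps\bigl((P^{11+\Del_{10}-1/4}Z_0)^{1/2}+(P^{20+2\Del_{10}-1/2}Z_0^3)^{1/4}\bigr)$, and on disentangling (and recalling $\kap_1(0)=11$ from \rf{kap1}) one obtains the first estimate.

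For the second estimate the minor arc restriction is inessential; here I would use instead the splitting
$$\int_\grm|f^2g^6K_0|\d\alp\le\Bigl(\int_0^1|f^4K_0^4|\d\alp\Bigr)^{1/4}\Bigl(\int_0^1|fg^6|^{4/3}\d\alp\Bigr)^{3/4}.$$
Since $|K_0(\alp)|\le Z_0$, the first factor is at most $\bigl(Z_0^2\int_0^1|f^4K_0^2|\d\alp\bigr)^{1/4}\ll(P^3Z_0^3+P^{2+\eps}Z_0^{7/2})^{1/4}$ by \rf{3.3}, while a second application of H\"older's inequality gives
$$\int_0^1|fg^6|^{4/3}\d\alp=\int_0^1|f|^{4/3}|g|^8\d\alp\le\Bigl(\int_0^1|f^2g^8|\d\alp\Bigr)^{2/3}\Bigl(\int_0^1|g|^8\d\alp\Bigr)^{1/3}\ll P^{(16+\Del_8+2\Del_{10})/3},$$
on invoking \rf{3.4}. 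Assembling these and recalling \rf{kap2}, one finds $TZ_0\ll P^\eps\bigl((P^{\kap_2(0)}Z_0^3)^{1/4}+(P^{2\kap_2(0)-2}Z_0^7)^{1/8}\bigr)$, and disentangling delivers the second estimate. This last argument is essentially the $\tau=0$, $r=2$ instance of the mechanism of Lemma~\ref{lemma3.2}, with $|f^2g^6|$ playing the role of $|g|^8$.

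The only step that is not mere bookkeeping is the pointwise estimate $f(\alp)\ll P^{7/8+\eps}$ on $\grm$, and this is where the precise shape of the major arcs \rf{Major} matters. Dirichlet's theorem together with Weyl's inequality alone gives only $P^{15/16+\eps}$ when the approximating denominator lies in the range $(P^{1/2},P^{7/2}]$; to obtain the full saving of $P^{1/4}$ in $|f(\alp)|^2$ one must, in the range where the denominator $q$ does not exceed $P^{1/2}$, use the classical major arc approximation $f(\alp)\ll q^{-1/4}P(1+P^4|\alp-a/q|)^{-1/4}$ together with the lower bound $|q\alp-a|\gg P^{-7/2}$ forced by $\alp\notin\grM$. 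It is precisely this gain, which has no counterpart in Lemma~\ref{lemma3.2}, that accounts for the extra factor $P^{\Del_{10}-1/4}$ in the first estimate of Lemma~\ref{lemma3.3}.
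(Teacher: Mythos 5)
Your argument is correct and follows essentially the same route as the paper: Cauchy--Schwarz/H\"older to isolate the large-values count controlled by \rf{3.3} from a mean-value factor, use of the enhanced minor-arc bound $\sup_{\grm}|f|\ll P^{7/8+\eps}$ to obtain the extra saving of $P^{\Del_{10}-1/4}$ relative to $\kap_1(0)$, and the mixed moments from \rf{2.1} in place of $J_8,J_{10}$ in the $r=2$ computation. The only difference is cosmetic: for the first bound the paper uses $\int_\grm|f^2g^6|K_0\le(\int_0^1|fgK_0|^2)^{1/2}(\int_\grm|f^2g^{10}|)^{1/2}$ and then $J_{10}$, whereas you split as $(\int_0^1|g^4K_0^2|)^{1/2}(\int_\grm|f^4g^8|)^{1/2}$ and then invoke $\int_0^1|f^2g^8|\ll P^{6+\Del_{10}}$; both routes give the same exponent. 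One small caveat on your closing remark: the enhanced estimate $\sup_\grm|f|\ll P^{7/8+\eps}$ is cited directly from Vaughan, and obtaining it is not merely a matter of treating $q\le P^{1/2}$ via the major-arc approximation -- one must also handle the intermediate range $P^{1/2}<q\le P$ (where raw Weyl still only yields $P^{15/16+\eps}$) with the approximation from \cite[Theorem 4.1]{Vau1997}, reserving Weyl's inequality for $q>P$. This does not affect the validity of your proof, since you only invoke the estimate, not reprove it.
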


\begin{proof}
Define
$${\cal J}=\int_\grm |f(\alp)^2g(\alp)^{10}|\d\alp .$$
An enhanced version of Weyl's inequality (see \cite[Lemma 3]{Vau1986b}) shows that
$$\sup_{\alp\in \grm}|f(\alp)|\ll P^{7/8+\eps},$$
and so we deduce via (\ref{3.4}) that
$${\cal J}\le \left( \sup_{\alp\in \grm}|f(\alp)|\right)^2J_{10}\ll 
P^{8+\Del_{10}-1/4+\eps}. $$
An application of Schwarz's inequality shows that
$$\int_\grm|f(\alp)^2g(\alp)^6|K_0(\alp)\d\alp \le \biggl( 
\int_0^1|f(\alp)g(\alp)K_0(\alp)|^2\d\alp \biggr)^{1/2}{\cal J}^{1/2}.$$
In view of (\ref{3.3}) and  (\ref{3.5}), another application of Schwarz's inequality 
yields 
$$TZ_0<\int_\grm |f(\alp)^2g(\alp)^6|K_0(\alp)\d\alp \ll P^\eps 
(P^3Z_0+P^2Z_0^{3/2})^{1/2}(P^{8+\Del_{10}-1/4})^{1/2}.$$
The first of the claimed estimates follows by disentangling this bound. For the second 
we proceed just as in the proof of Lemma \ref{lemma3.2} in the case $r=2$, noting that the mean value 
estimates for $J_8$ and $J_{10}$ should in this instance be replaced by the estimates
$$\int_0^1|f(\alp)^2g(\alp)^6|\d\alp \ll P^{4+\Del_8}\quad \text{and}\quad 
\int_0^1|f(\alp)^2g(\alp)^8|\d\alp \ll P^{6+\Del_{10}},$$
available via (\ref{2.1}). This completes the proof of the lemma.
\end{proof}

\section{Fourier coefficients and their moments} Our goal in this section is the proof of 
an estimate for a certain mixed moment of Fourier coefficients associated with quartic 
Weyl sums. This we achieve by employing our large values estimates of the previous 
section so as to bound the quantities $M_\tau(T)$ and $M_0(T)$ defined in 
(\ref{Mtau}) and (\ref{Mzero}). We proceed in stages. In what follows, we make use of 
a positive number $\tau$ satisfying
\be{4.0} 40\tau\le \min\{ 1-4\Del_{10},1-2\Del_{10}-\Del_8\}.\ee

\begin{lemma}\label{lemma4.1}
Suppose that $\tau$ is a positive number satisfying {\rm (\ref{4.0})}. Then
$$\sum_{|n|\le P^5}|\psi_\tau(n)|^4\ll P^{20-\tau}\quad \text{and}\quad 
\sum_{|n|\le P^5}|\Psi_\grm(n)|^4\ll P^{20-9\tau}.$$
\end{lemma}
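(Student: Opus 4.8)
The plan is to estimate $\sum_{|n|\le P^5}|\psi_\tau(n)|^4$ and $\sum_{|n|\le P^5}|\Psi_\grm(n)|^4$ by a dyadic dissection over the size of the Fourier coefficients, summing the contributions $M_\tau(T)$ and $M_0(T)$ defined in (\ref{Mtau}) and (\ref{Mzero}) over $O(\log P)$ dyadic ranges $T\in(P^{-A},P^5]$, where the tail $|\psi_\tau(n)|\le P^{-A}$ (respectively $|\Psi_\grm(n)|\le P^{-A}$) with $A$ large contributes negligibly since there are only $O(P^5)$ terms. For each dyadic $T$ we have $M_\tau(T)\le (2T)^4 Z_T$ and $M_0(T)\le (2T)^4 Z_0$, so the whole matter reduces to inserting the large values estimates of Lemma \ref{lemma3.2} and Lemma \ref{lemma3.3} and checking that, for every $T$ in the relevant range, $T^4 Z_T\ll P^{20-\tau+\eps}$ and $T^4 Z_0\ll P^{20-9\tau+\eps}$, whence the claims follow after absorbing the $\log P$ factor and $P^\eps$ into a further $P^\eps$ and recalling the convention on $\eps$.

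First I would treat $\psi_\tau$. Here Lemma \ref{lemma3.2} gives, for $r=1$ and $r=2$,
$$T^4 Z_T\ll P^\eps\bigl(P^{\kap_r(\tau)}T^{4-2r}+P^{2\kap_r(\tau)-2}T^{4-4r}\bigr).$$
For $r=1$ the two terms are $P^{\kap_1(\tau)}T^2$ and $P^{2\kap_1(\tau)-2}$; since $\kap_1(\tau)=11-(2-\Del_{10})\tau$, the second term is already $\ll P^{20-2(2-\Del_{10})\tau}$, which is acceptable because $40\tau\le 1-4\Del_{10}$ forces $2(2-\Del_{10})\tau\ge\tau$ (indeed far more). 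The first term, $P^{\kap_1(\tau)}T^2$, is small when $T$ is small; it only threatens for $T$ close to $P^5$, and there one switches to $r=2$: the terms $P^{\kap_2(\tau)}T^0\cdot T^{-2}$... more precisely $T^4Z_T\ll P^\eps(P^{\kap_2(\tau)}T^{-4}+P^{2\kap_2(\tau)-2}T^{-12})$, which decreases in $T$ and so is largest at the smallest $T$ for which we use it. The standard device is to pick the crossover point $T_0$ where the $r=1$ bound $P^{\kap_1(\tau)}T^2$ meets the $r=2$ bound $P^{\kap_2(\tau)}T^{-4}$, i.e. $T_0^6=P^{\kap_2(\tau)-\kap_1(\tau)}$, and to verify that at $T=T_0$ the common value is $\ll P^{20-\tau+\eps}$; the remaining $r=2$ term $P^{2\kap_2(\tau)-2}T^{-12}$ is handled similarly or is dominated. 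This reduces to a linear inequality in $\Del_8,\Del_{10}$, which the hypothesis (\ref{4.0}) together with the admissible values $\Del_8\le 0.594193$, $\Del_{10}\le 0.1991466$ is designed to satisfy with room to spare. The second estimate, for $\Psi_\grm$, is entirely parallel but now uses Lemma \ref{lemma3.3}: the first bound there has the extra saving $P^{\Del_{10}-1/4}$ (respectively $P^{2\Del_{10}-5/2}$) coming from the minor-arc Weyl estimate $\sup_\grm|f|\ll P^{7/8+\eps}$, and this is exactly what upgrades $P^{20-\tau}$ to $P^{20-9\tau}$; one again crosses over to the $r=2$-type bound near $T\approx P^5$ and checks the resulting numerical inequalities.

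The main obstacle is purely the bookkeeping of the crossover optimisation: one must confirm that across the entire dyadic range the pointwise bound $T^4Z_T\ll P^{20-\tau+\eps}$ (and its minor-arc counterpart with $20-9\tau$) never fails, which amounts to checking a handful of explicit linear inequalities in $\Del_8$ and $\Del_{10}$ after substituting (\ref{kap1}) and (\ref{kap2}). The delicate point is that the margin is thin — this is precisely why the paper needs $\Del_8+2\Del_{10}<1$ and the quantitative form (\ref{4.0}) of the smallness of $\tau$ — so I would carry the constants explicitly rather than absorbing them prematurely, and verify the worst case (the crossover value of $T$, and the endpoint $T$ comparable to $P^5$) by direct substitution of the numerical values of $\Del_8$ and $\Del_{10}$.
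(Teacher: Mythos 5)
Your overall plan—dyadic dissection, pigeonholing to a dominant $T$, reduction to $T^4 Z_T$, and a regime split between the $r=1$ and $r=2$ cases of Lemma~\ref{lemma3.2} (with Lemma~\ref{lemma3.3} playing the same role for $\Psi_\grm$)—is exactly the paper's. But your ``more precisely'' clause contains a slip that would corrupt the verification you defer. Substituting $r=2$ into your own (correct) general formula gives
$$T^4 Z_T\ll P^\eps\bigl(P^{\kap_2(\tau)}+P^{2\kap_2(\tau)-2}T^{-4}\bigr),$$
not $P^{\kap_2(\tau)}T^{-4}+P^{2\kap_2(\tau)-2}T^{-12}$. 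With the erroneous exponents the crossover sits near $T_0\approx P^{3/2}$ and the apparent common value is about $P^{14}$, wildly below $P^{20-\tau}$, which is the source of your ``room to spare'' impression. With the correct exponents the crossover satisfies $T_0^2=P^{\kap_2(\tau)-\kap_1(\tau)}$, i.e.\ $T_0\approx P^{9/2}$, precisely where the paper splits; there the $r=1$ bound delivers $P^{\kap_1(\tau)+9}=P^{20-(2-\Del_{10})\tau}$, so the margin over $P^{20-\tau}$ is only $(1-\Del_{10})\tau$. On the range $T>P^{9/2}$ the dominant term is $P^{\kap_2(\tau)}$ itself, and this is acceptable only because $\kap_2(\tau)\le 19+\Del_8+2\Del_{10}\le 20-40\tau$ by hypothesis~(\ref{4.0}); the condition $\Del_8+2\Del_{10}<1$ is thus used at full strength, not with slack. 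The $\Psi_\grm$ case runs in parallel, the minor-arc saving $P^{\Del_{10}-1/4}$ in Lemma~\ref{lemma3.3} giving $P^{20+2\Del_{10}-1/2+\eps}$ on $T\le P^{9/2}$, and (\ref{4.0}) again supplying $P^{20-10\tau+\eps}\ll P^{20-9\tau}$.

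In short: right architecture, but correct the $r=2$ exponents and repeat the crossover computation at $T=P^{9/2}$ before claiming the numerical inequalities close.
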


\begin{proof} Observe first that
$$\sum_{\substack{|n|\le P^5\\ |\psi_\tau(n)|>1}}|\psi_\tau(n)|^4\le 
\sum_{\substack{l=0\\ 2^l\le P^5}}^\infty M_\tau(2^l).$$
Thus, for some number $T$ with $1\le T\le P^5$, one has
$$\sum_{|n|\le P^5}|\psi_\tau(n)|^4\ll P^5+(\log P)M_\tau(T)\ll P^5+P^\eps T^4Z_T.$$
Should $T$ satisfy the bound $T\le P^{9/2}$, then it follows from the estimate supplied by 
Lemma \ref{lemma3.2} with $r=1$ that 
\begin{align*}
\sum_{|n|\le P^5}|\psi_\tau(n)|^4&\ll P^5+P^\eps \left( 
P^{\kap_1(\tau)}T^2+P^{2\kap_1(\tau)-2}\right)\\
&\ll P^{20-(2-\Del_{10})\tau+\eps}\ll P^{20-\tau}.
\end{align*}
Meanwhile, when $P^{9/2}<T\le P^5$, we discern from Lemma \ref{lemma3.2} with 
$r=2$ that
\begin{align*}
\sum_{|n|\le P^5}|\psi_\tau(n)|^4&\ll P^5+P^\eps \left( P^{\kap_2(\tau)}+
P^{2\kap_2(\tau)-2}T^{-4}\right) \\
&\ll P^\eps \left(P^{\kap_2(\tau)}+P^{2\kap_2(\tau)-20}\right).
\end{align*}
In view of our hypotheses concerning $\tau$, one finds that
$$\kap_2(\tau)\le 19+\Del_8+2\Del_{10}\le 20-40\tau$$
and
$$2\kap_2(\tau)-20\le 18+2\Del_8+4\Del_{10}\le 20-80\tau.$$
The first conclusion of the lemma is now immediate.\par

In like manner, one finds that for some number $T$ with $1\le T\le P^5$, one has
$$\sum_{|n|\le P^5}|\Psi_\grm(n)|^4\ll P^5+(\log P)M_0(T)\ll P^5+P^\eps T^4Z_0(T).$$
Should one have $T\le P^{9/2}$, then it follows from the first 
estimate of Lemma \ref{lemma3.3} that one has
\begin{align*}
\sum_{|n|\le P^5}|\Psi_\grm(n)|^4&\ll P^5+P^\eps \left( P^{11+\Del_{10}-1/4}T^2+
P^{20+2\Del_{10}-1/2}\right) \\
&\ll P^{\eps}\left( P^{20+\Del_{10}-1/4}+P^{20+2\Del_{10}-1/2}\right).
\end{align*}
Meanwhile, when $P^{9/2}<T\le P^5$, the second estimate of Lemma \ref{lemma3.3} 
yields
\begin{align*}
\sum_{|n|\le P^5}|\Psi_\grm(n)|^4&\ll P^5+P^\eps\left( P^{\kap_2(0)}+
P^{2\kap_2(0)-2}T^{-4}\right)\\
&\ll P^\eps \left( P^{19+\Del_8+2\Del_{10}}+P^{18+2\Del_8+4\Del_{10}}\right).
\end{align*}
Thus, in all cases, our hypotheses concerning $\tau$ ensure that
$$\sum_{|n|\le P^5}|\Psi_\grm(n)|^4\ll P^{20-10\tau+\eps},$$
and the second conclusion of the lemma follows.
\end{proof}

\begin{lemma}\label{lemma4.2}
Let $a$ and $b$ be non-zero integers, and suppose that the positive number $\tau$ satisfies \rf{4.0}. 
Then, one has
$$\sum_{|n|\le P^5}\psi_0(an)^2\psi_\tau(bn)^2\ll P^{20-2\tau}.$$
\end{lemma}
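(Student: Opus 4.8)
The plan is to bound the mixed sum $\sum_{|n|\le P^5}\psi_0(an)^2\psi_\tau(bn)^2$ by an application of H\"older's inequality that splits the two types of Fourier coefficient, and then to feed in the two fourth-moment estimates already secured in Lemma \ref{lemma4.1}. Specifically, writing $S=\sum_{|n|\le P^5}\psi_0(an)^2\psi_\tau(bn)^2$, H\"older's inequality with exponents $2$ and $2$ gives
$$S\le \Bigl(\sum_{|n|\le P^5}|\psi_0(an)|^4\Bigr)^{1/2}\Bigl(\sum_{|n|\le P^5}|\psi_\tau(bn)|^4\Bigr)^{1/2}.$$
Each of these sums is, after the harmless dilation $n\mapsto an$ or $n\mapsto bn$ (which merely rescales the range of summation by the fixed constants $|a|$ or $|b|$ and can be absorbed since implicit constants are permitted to depend on the ambient coefficients), a sum of the shape already estimated in Lemma \ref{lemma4.1}. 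Thus the first factor is $\ll P^{20}$ and the second is $\ll P^{20-9\tau}$ — wait, one must be careful: Lemma \ref{lemma4.1} gives $\sum|\psi_\tau(n)|^4\ll P^{20-\tau}$, not $P^{20-9\tau}$, while the $P^{20-9\tau}$ bound is for $\Psi_\grm$. So the direct H\"older split yields only $S\ll P^{1/2(20-0)+1/2(20-\tau)}=P^{20-\tau/2}$, which falls short of the claimed $P^{20-2\tau}$.

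To recover the stronger exponent one must not waste the $\psi_0$ factor. Instead I would interpolate: rather than using the crude $\sum|\psi_0(an)|^4\ll P^{20}$, one should exploit that $\psi_0$ carries the same Fourier coefficient as $|g|^8$, and that the sum $\sum_n|\psi_0(n)|^2$ is controlled by an octic mean value — indeed $\sum_n|\psi_0(n)|^2=\int_0^1|g(\al)|^{16}\,\d\al$ is too large, so the right move is a large-values decomposition. Concretely, I would run the dyadic pigeonhole in $T$ simultaneously for $\psi_0(an)$ and $\psi_\tau(bn)$: partition $n$ according to the dyadic size $U$ of $|\psi_0(an)|$ and the dyadic size $T$ of $|\psi_\tau(bn)|$, bound the common cardinality by the minimum of the two large-values counts $Z^{(0)}_U$ (from Lemma \ref{lemma3.2} applied with $\tau=0$) and $Z^{(\tau)}_T$ (from Lemma \ref{lemma3.2}), and then optimise. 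The weight contributed by such a block is $U^2T^2\cdot\min\{Z^{(0)}_U,Z^{(\tau)}_T\}$, and summing over the $O((\log P)^2)$ dyadic pairs reduces everything to a finite optimisation over $(U,T)$ using the explicit exponents $\kap_1(\tau),\kap_2(\tau)$ of \rf{kap1}, \rf{kap2}. This is the same mechanism already used to prove Lemma \ref{lemma4.1}, now applied to the product.

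The main obstacle, and where the constraint \rf{4.0} on $\tau$ will be used, is the optimisation in the regime where $U$ is large (so $\psi_0$ is genuinely peaked) but $T$ is moderate: here one cannot afford to use $Z^{(0)}_U$ alone, nor $Z^{(\tau)}_T$ alone, and the $r=1$ versus $r=2$ branches of Lemma \ref{lemma3.2} must be balanced against each other for each coefficient separately. I expect the computation to show that the extremal configuration occurs at $U\asymp T\asymp P^{9/2}$, the crossover point between the two branches, and that the hypothesis $40\tau\le\min\{1-4\Del_{10},\,1-2\Del_{10}-\Del_8\}$ is exactly what guarantees the resulting exponent does not exceed $20-2\tau$. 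A cleaner alternative worth trying first: apply H\"older with exponents $(4/3,4)$ to write $S\le(\sum|\psi_0(an)|^{8/3}|\psi_\tau(bn)|^{8/3})^{?}$ — but this reintroduces sixth-type moments and is unlikely to beat the direct large-values approach, so I would commit to the dyadic-decomposition route and treat the $(U,T)$ optimisation as the crux of the argument.
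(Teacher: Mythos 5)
You correctly identify that a direct application of Cauchy--Schwarz together with the fourth-moment bounds of Lemma~\ref{lemma4.1} delivers only $P^{20-\tau/2}$, and this observation is an accurate reading of the structural obstacle. However, the dyadic decomposition in $(U,T)$ that you propose as a remedy would not recover the claimed exponent either, and the paper's actual proof proceeds by a genuinely different mechanism which your proposal does not contain.

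To see why the dyadic route falls short: the coefficient $\psi_0$ carries no $\tau$-saving, so the large-values estimate of Lemma~\ref{lemma3.2} run formally at $\tau=0$ gives at best $Z^{(0)}_U\ll P^\eps\bigl(P^{11}U^{-2}+P^{20}U^{-4}\bigr)$, while near the critical range one has $Z^{(\tau)}_T\ll P^\eps P^{2\kap_1(\tau)-2}T^{-4}=P^{20-2(2-\Del_{10})\tau+\eps}T^{-4}$. Choosing $U\asymp P^{(2-\Del_{10})\tau/2}\,T$ with $T$ just below $P^{9/2}$ equalises the two large-values bounds, and the corresponding block contributes
$$U^2T^2\min\bigl\{Z^{(0)}_U,\,Z^{(\tau)}_T\bigr\}\gg P^{20-(2-\Del_{10})\tau}.$$
Since $2-\Del_{10}<2$, this strictly exceeds $P^{20-2\tau}$, and no adjustment of $\tau$ within \rf{4.0} repairs the deficit; indeed \rf{4.0} is invoked in the paper for a different purpose (to absorb the $r=2$ contributions inside Lemma~\ref{lemma4.1}), not to balance the two halves of a mixed dyadic optimisation. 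The essential problem is that $\min\{a,b\}\le\sqrt{ab}$, so a pure large-values treatment of $\psi_0$ and $\psi_\tau$ can never beat what Cauchy--Schwarz already extracts from the individual fourth moments, and the $\psi_0$ fourth moment is irreducibly of size $P^{20}$.

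The decisive idea that is missing from your proposal is a major/minor arc decomposition of the $\psi_0$ factor itself. The paper dominates $\psi_0(n)$ by the hybrid coefficient $\Psi_{[0,1)}(n)=\int_0^1|f(\al)^2g(\al)^6|e(-n\al)\,\d\al$, then splits $\Psi_{[0,1)}=\Psi_\grM+\Psi_\grm$ according to the dissection \rf{Major}. On the major arcs the pruning Lemma~\ref{lemmahash} furnishes the \emph{pointwise} bound $\Psi_\grM(n)\ll P^4$, which eliminates the $P^{\Del_8}$-loss entirely, and then Bessel's inequality gives
$$\sum_{n}\Psi_\grM(an)^2\psi_\tau(bn)^2\ll P^8\sum_{n}\psi_\tau(n)^2=P^8\int_0^1|g(\al)|^{16-2\tau}\,\d\al\ll P^{12-2\tau}\int_0^1|g(\al)|^{12}\,\d\al\ll P^{20-2\tau},$$
which is exactly the target. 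On the minor arcs the Weyl-inequality input in Lemma~\ref{lemma3.3} yields the stronger fourth moment $\sum_n\Psi_\grm(n)^4\ll P^{20-9\tau}$, and Cauchy--Schwarz then gives $\ll P^{20-5\tau}$, dominated by the major-arc term. Thus the precise exponent $20-2\tau$ comes from the interplay of (i) replacing $|g|^8$ by $|f|^2|g|^6$, (ii) a pointwise major-arc pruning bound, and (iii) Bessel's inequality — none of which appear in your dyadic scheme.
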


\begin{proof} Observe that, by orthogonality and a consideration of the underlying 
Diophantine equations, one has $\psi_0(an)=0$ whenever $|n|>P^{9/2}$. In addition,
$$\psi_0(n)=\int_0^1|g(\alp)|^8e(-n\alp)\d\alp \le \int_0^1 |f(\alp)^2g(\alp)^6|
e(-n\alp)\d\alp =\Psi_{[0,1)}(n),$$
whence $\psi_0(n)\le \Psi_\grm(n)+\Psi_\grM(n)$. Thus,
$$\psi_0(n)^2\le 2(\Psi_\grm(n)^2+\Psi_\grM(n)^2),$$
and we deduce that 
\begin{equation}\label{4.A}
\sum_{|n|\le P^5}\psi_0(an)^2\psi_\tau(bn)^2\ll \Xi(\grM)+\Xi(\grm),
\end{equation}
where
$$\Xi(\grB)=\sum_{|n|\le P^{9/2}}\Psi_\grB(an)^2\psi_\tau(bn)^2.$$

\par On the one hand, by Cauchy's inequality and Lemma \ref{lemma4.1}, we have
\begin{align*}
\Xi(\grm)&\le \biggl( \sum_{|n|\le P^5}\Psi_\grm(n)^4\biggl)^{1/2}
\biggl( \sum_{|n|\le P^5}\psi_\tau(n)^4\biggr)^{1/2}\\
&\ll (P^{20-9\tau})^{1/2}(P^{20-\tau})^{1/2}\ll P^{20-5\tau}.
\end{align*}
On the other hand, the adjuvant Lemma \ref{lemmahash} provided in the appendix 
combines with the triangle inequality to give $\Psi_\grM(an)=O(P^4)$. Thus, as a 
consequence of Bessel's inequality, one has
$$\Xi(\grM)\ll (P^4)^2\sum_{|n|\le P^5}\psi_\tau(bn)^2\ll P^8
\int_0^1|g(\alp)|^{16-2\tau}\d\alp .$$
By \rf{2.1A} and Lemma \ref{lemma2.1}, we now infer that
$$\Xi(\grM)\ll P^{12-2\tau} \int_0^1|g(\alp)|^{12}\d\alp \ll P^{20-2\tau},$$
and thus it follows that
$\Xi(\grM)+\Xi(\grm)\ll P^{20-2\tau}$.
The conclusion of the lemma is now immediate from (\ref{4.A}).
\end{proof}

\section{The transition to moments of smooth Weyl sums}
In this section we establish Theorem \ref{theorem1.2}. With this end in view, we put
$$ \psi_\tau(m;l)=\begin{cases} \psi_\tau(m/l),&\text{when $l|m$,}\\
0,&\text{otherwise.}\end{cases} $$
Suppose that $a_i,b_i$ $(1\le i\le 3)$ are non-zero integers. When $\nu$ is a sufficiently 
small positive number, we write
\be{5.0} I_\nu(\bfa,\bfb)=\int_{[0,1)^3}|g(a_1\al_1)g(a_2\al_2)g(a_3\al_3)
g(b_1\al_1+b_2\al_2+b_3\al_3)|^{8-\nu} \d\bfalp .\ee
By H\"older's inequality, it follows via a change of variable and symmetry that there are non-zero integers $c_i, d_i$
$(1\le i\le 3)$ such that
$$ I_\nu(\bfa,\bfb)\ll \int_{[0,1)^3}|g(c_1\al_1)g(c_2\al_2)|^{8-2\nu} | g(c_3\al_3)g(d_1\al_1+d_2\al_2+d_3\al_3)|^{8} \d\bfalp
.$$
Since $|g(\tet)|=|g(-\tet)|$ and
$$ |g(\theta)|^8 = \sum_{|n|\le 4P^4} \psi_0(n)e(n\theta), $$
we find that 
\begin{align*}
I_\nu(\bfa,\bfb) &\ll \sum_{|n|\le 4P^4} \psi_0(n)\int_{[0,1)^3}|g(c_1\al_1)g(c_2\al_2)|^{8-2\nu}
|g(c_3\al_3)|^8 e(-n\bfd\cdot\bfalp)\d\bfalp \\
& = \sum_{|n|\le 4P^4} \psi_0(n)\psi_{2\nu}(nd_1;c_1)\psi_{2\nu}(nd_2;c_2)\psi_{0}(nd_3;c_3).
\end{align*}
Thus, on employing the inequality $|z_1z_2|\le 2(|z_1|^2+|z_2|^2)$, we obtain
\begin{align*}
I_\nu(\bfa,\bfb) &\ll \sum_{|n|\le 4P^4} \big(\psi_0(n)^2+ \psi_{0}(nd_3;c_3)^2\big) 
\big(\psi_{2\nu}(nd_1;c_1)^2+\psi_{2\nu}(nd_2;c_2)^2\big)\\
&\ll  \sum_{|n|\le 4P^4} \psi_0(k_1n;l_1)^2 \psi_{2\nu}(k_2 n;l_2)^2,
\end{align*}
for suitable non-zero integers $k_i$, $l_i$. Hence, we conclude from Lemma 
\ref{lemma4.2}  that
$$ I_\nu(\bfa,\bfb)
\ll  \sum_{|n|\le 4P^4} \psi_0(k_1n)^2 \psi_{2\nu}(k_2 n)^2 \ll P^{20-4\nu}. $$
This completes the proof of Theorem \ref{theorem1.2}

\section{Prelude to the circle method}
We assume the hypotheses of Theorem \ref{theorem1.1}, and in particular suppose that 
$s\ge 32$. With the column vectors $(a_{ij})_{1\le i\le 3} \in 
\dbZ^3\setminus  \{\mathbf 0\}$, we associate the ternary forms
$$\Lam_j=\sum_{i=1}^3 a_{ij}\alp_i\quad (1\le j\le s),$$
and the  linear forms $L_i(\bfgam)$ $(1\le i \le 3)$ defined for $\bfgam\in \dbR^s$ by
$$ L_i(\bfgam)=\sum_{j=1}^sa_{ij}\gam_j.$$

The hypotheses of Theorem \ref{theorem1.1} ensure that there is a non-singular real 
solution of the system (\ref{1.1}). By invoking homogeneity, therefore, one finds that 
there exists a real solution $\bfx=\bftet$ in $[0,1)^s$ for which the $3\times s$ matrix 
$(4a_{l,j}\theta_j^3)$ has maximal rank. Hence, there exist distinct indices $j_1$, $j_2$ 
and $j_3$ for which the $3\times 3$ matrix formed with the columns indexed by $j_1$, 
$j_2$ and $j_3$ is non-singular. The solution set of the system of equations (\ref{1.1}) 
remains unchanged if one replaces any one of its equations by the equation obtained by 
adding to it any multiple of another equation. Thus, by appropriate elementary row 
operations on the matrix of coefficients $(a_{l,j})$, there is no loss of generality in 
supposing that the system \rf{1.1} takes the form
\begin{equation}\label{6.0}
a_{l,j_l}x_{j_l}^4=-\sum_{\substack{j=1\\ j\not\in \{j_1,j_2,j_3\}}}^s a_{l,j}x_j^4\quad 
(1\le l\le 3),
\end{equation}
with $a_{l,j_l}\neq 0$ $ (1\le l\le 3)$.  
An application of the inverse function theorem consequently confirms that whenever 
$\Del>0$ is sufficiently small, the simultaneous equations
$$  a_{l,j_l} x_{j_l}^4 = - \sum_{\substack{j=1\\ j\not\in \{j_1,j_2,j_3\}}}^s a_{l,j}
(\theta_j+\Del)^4 \quad (1\le l\le 3)$$
remain soluble for $x_{l,j_l}$ with $x_{l,j_l}>0$. In this way we see that the system
 (\ref{1.1}) possesses a non-singular real solution $\bftet$ satisfying $\bftet\in (0,1)^s$. 
Now we choose a positive number $\del$ with the property that $\bftet\in (\del,1)^s$, 
and fix this value of $\del$ throughout the remaining sections of this paper. In addition, we
 fix $\eta>0$ and $\nu>0$ to be sufficiently small in the context of Theorem 
\ref{theorem1.2}.\par

Next, define 
$$G_0(\bfalp)=\prod_{j=1}^{32}g(\Lam_j)\quad \text{and}\quad  
G(\bfalp)=\prod_{j=1}^sg(\Lam_j).$$
Here and later, we write $g(\al)=g_\del(\al;P,R)$.
By orthogonality, one has
$$\calN(P)\ge \int_{[0,1)^3} G(\bfalp)\d\bfalp.$$
 The Hardy-Littlewood dissection is defined as follows.  We put $L=\log\log P$, take  
 $Q=L^{40}$, and when $b_l\in \dbZ$ $(1\le l\le 3)$ and $q\in \dbN$ we define
$$\grN(q,\bfb)=\{\bfalp\in [0,1)^3:|\alp_l-b_l /q|\le QP^{-4}\, (1\le l\le 3)\}.$$
We then take $\grN$ to be the union of the boxes $\grN(q,\bfb)$ with $0\le \bfb\le q\le Q$ 
and $(q,\bfb)=1$. Finally, we put $\grn=[0,1)^3\setminus \grN$.\par

The contribution of the major arcs $\grN$ in this dissection satisfies
\begin{equation}\label{6.4}
\int_\grN G(\bfalp)\d\bfalp \gg P^{s-12},
\end{equation}
a fact we confirm in \S8. Meanwhile, in \S7 we show that
\begin{equation}\label{6.5}
\int_\grn G(\bfalp)\d\bfalp=o(P^{s-12}).
\end{equation}
The desired conclusion $\calN(P)\gg P^{s-12}$ is immediate from (\ref{6.4}) and 
(\ref{6.5}) on noting that $[0,1)^3$ is the disjoint union of $\grN$ and $\grn$.

\section{The minor arc treatment} In this section we establish the minor arcs bound 
\rf{6.5}. We start with an inspection of the proof of \cite[Lemma 8.1]{Freeasy}. This shows that there 
exist positive numbers $B$ and $C$ with the following property. Suppose that $P$ is a 
large real number, and that $\gm$ is a real number with $P^{-B}<\gm\le 1$. Then, 
whenever $|g(\al)|\ge \gm P$, there exist integers $a$ and $q$ with
$$ (a,q)=1, \quad 1\le q\le C\gm^{-12}\quad {\text and}\quad |q\al -a|\le 
C\gm^{-12}P^{-4}. $$
Note that, whenever $|G(\bfalp)|\ge P^sL^{-1}$, then $|g(\Lam_j)|\ge PL^{-1}$ for 
$1\le j\le s$. Hence, there exist integers $c_j$ and $q_j$ with $1\le q_j \le L^{13}$, 
$(c_j,q_j)=1$ and $|q_j\Lam_j -c_j|\le L^{13}P^{-4}$ $(1\le j\le s)$. By considering the 
indices $j_1,j_2,j_3$, one finds that there exist $b_l\in\ZZ$ $(1\le l\le 3)$ and $q\in\NN$ 
with $0\le \bfb\le q\le L^{40}$, $(q,\bfb)=1$ and $|\al_l -b_l/q|\le L^{40}P^{-4}$ 
$(1\le l\le 3)$. Hence $\bfalp\in\grN$. This shows that
$$ \sup_{\bfalp\in\grn} |G(\bfalp)| \ll P^{s}L^{-1}. $$
On applying a trivial estimate for excessive factors $g(\al)$, therefore, we obtain
\begin{align*} \int_\grn G(\bfalp) \d\bfalp & \ll \Bigl( \sup_{\bfalp\in\grn} |G(\bfalp)|\Bigr)^\nu 
\int_{[0,1)^3} |G(\bfalp)|^{1-\nu}\d\bfalp \\
& \ll(P^{s-32})^{1-\nu}(P^sL^{-1})^\nu  \int_{[0,1)^3} |G_0(\bfalp)|^{1-\nu}\d\bfalp.
\end{align*}
Further, by applying H\"older's inequality, we obtain
$$ \int_{[0,1)^3} |G_0(\bfalp)|^{1-\nu}\d\bfalp
\le \prod_{l=0}^7 \Big(  \int_{[0,1)^3} |g(\Lam_{4l+1})\ldots 
g(\Lam_{4l+4})|^{8-8\nu}\d\bfalp\Big)^{1/8}. $$
On noting that $g(\al)$ has period 1, a change of variables confirms that 
for each $l$ with $0\le l\le 7$ there are non-zero integers $a_i,b_i$ $(1\le i\le 3)$ such 
that, in the notation introduced in \rf{5.0}, one has
$$   \int_{[0,1)^3} |g(\Lam_{4l+1})\ldots g(\Lam_{4l+4})|^{8-8\nu}\d\bfalp \ll 
I_{8\nu} (\bfa,\bfb). $$
Hence, by Theorem  \ref{theorem1.2}, one concludes that
$$\int_\grn G(\bfalp) \d\bfalp\ll (P^{s-32})^{1-\nu}(P^sL^{-1})^\nu (P^{20-32\nu}) \ll 
P^{s-12} L^{-\nu}.  $$
This inequality is a quantitative form of \rf{6.5}

\section{The major arcs analysis}
The analysis of the major arcs is largely standard.  Define
$$S(q,a)=\sum_{r=1}^qe(ar^4/q),\quad T(q,\bfc)=q^{-s}\prod_{j=1}^s
S(q,\Lam_j(\bfc)),$$
$$\grA(q)=\underset{(q,\bfc)=1}{\sum_{1\le \bfc\le q}}T(q,\bfc)\quad \text{and}\quad 
\grS(X)=\sum_{1\le q\le X}\grA(q).$$
Also, put
$$v(\tet)=\int_{\del P}^Pe(\tet \gam^4)\d\gam \quad \text{and}\quad 
V(\bfgam)=\prod_{j=1}^sv(\Lam_j(\bfgam)).$$
Write $\calB(X)=[-XP^{-4},XP^{-4}]^3$ and define 
$$\grJ(X)=\int_{\calB(X)}V(\bfgam)\d\bfgam.$$

Standard arguments (\cite[Lemma 5.4]{Vau1989} and \cite[Lemma 8.5]{SAEii}) show  
that there is a positive number $\rho$ having the property that whenever 
$\bfalp\in \grN(q,\bfb)\subseteq \grN$, one has
$$G(\bfalp)-\rho T(q,\bfc)V(\bfalp-\bfb/q)\ll P^s(\log P)^{-1/2}.$$
Integrating over $\grN$, we infer that
\begin{equation}\label{8.1}
\int_\grN G(\bfalp)\d\bfalp =\rho \grS(Q)\grJ(Q)+O(P^{s-12}(\log P)^{-1/4}).
\end{equation}

\begin{lemma}\label{lemma8.1} Under the hypotheses of Theorem \ref{theorem1.1}, the 
limit $\grS=\underset{X\rightarrow \infty}\lim\grS(X)$ exists, one has 
$\grS-\grS(X)\ll X^{-1/2}$, and $\grS\gg 1$.
\end{lemma}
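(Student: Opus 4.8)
The plan is to establish the three claims about the singular series $\grS$ in the standard way for diagonal quartic systems, exploiting the block structure of the propitious matrix. First I would show that $\grA(q)$ is multiplicative as a function of $q$, which follows from the Chinese Remainder Theorem applied to the complete exponential sums $S(q,a)$ and hence to $T(q,\bfc)$ and the sum over $\bfc$. This reduces the convergence question to understanding the local factors $\grA(p^k)$ at each prime $p$. The heart of the matter is to prove a bound of the shape $\grA(p^k)\ll p^{-k(1+\delta)}$ for some fixed $\delta>0$, uniformly in $p$ and $k$: given such a bound one immediately gets absolute convergence of $\sum_q\grA(q)$, the tail estimate $\grS-\grS(X)\ll X^{-1/2}$ (after checking $\delta\ge 1/2$ or simply taking $X$-power savings through the Euler product tail), and the existence of the limit $\grS$.

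The key input for the local bound is a Weyl-type estimate for the sum $S(q,a)$ with $(a,q)=1$, namely $S(p^k,a)\ll p^{k(3/4)}$ for $p>2$ (with a slightly weaker but still nontrivial bound at $p=2$), together with the exact evaluation $S(p,a)\ll p^{1/2}$ when $4\nmid p-1$. Since the matrix is highly non-singular, for any prime $p$ at most two of the three-dimensional forms $\Lam_j(\bfc)$ can be simultaneously $\equiv 0 \pmod p$ as $\bfc$ ranges over a line, so among any three of the $s$ linear forms $\Lam_1,\dots,\Lam_s$ at least one is coprime to $p$ for each fixed $\bfc$ with $(q,\bfc)=1$; more carefully, one shows that the number of $\bfc$ modulo $p^k$ for which many of the $S(p^k,\Lam_j(\bfc))$ are large is controlled. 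Combining the pointwise bound $|S(p^k,\Lam_j(\bfc))|\le p^{3k/4}$ for the "good" forms with a trivial bound $|S(p^k,\cdot)|\le p^k$ for the "bad" ones, and summing over $\bfc$, yields $|\grA(p^k)|\le p^{3k}\cdot p^{-sk/4}\cdot(\text{number of bad }\bfc)$; since $s\ge 32$ one has $s/4\ge 8$, leaving a comfortable power saving even after accounting for the bad $\bfc$ and the three free coordinates. The eight highly non-singular blocks $A_0,\dots,A_7$ are exactly what guarantee enough good forms regardless of how $\bfc$ degenerates modulo $p$.

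Finally, to obtain $\grS\gg 1$ I would invoke the hypothesis that the system \rf{1.1} has non-singular $p$-adic solutions for every prime $p$. Standard arguments (Hensel's lemma, or the treatment in \cite[Lemma 5.4]{Vau1989} and its analogues) convert this into a uniform lower bound $\prod_{k\ge 0}(1+\grA(p^k)+\cdots)=\chi_p\gg 1$ for the local densities, where for all sufficiently large $p$ one has $\chi_p=1+O(p^{-1-\delta})$ from the bound just established, so the product over all primes converges to a positive number. The remark following Theorem \ref{theorem1.1} citing \cite[Theorem 1]{ABC} shows that $p$-adic solubility is automatic for $p\ge 2^{16}$, so only finitely many primes require the hypothesis, and for each such prime the non-singular solvability gives $\chi_p>0$; multiplying through gives $\grS=\prod_p\chi_p\gg 1$. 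The main obstacle I anticipate is making the bound on $\grA(p^k)$ genuinely uniform in small primes, particularly $p=2$ and $p=3$ where the cleanest Weyl estimates for quartic Gauss sums degrade; here one typically reverts to a direct estimate of $\grA(p^k)$ for bounded $p$ using the large number of variables, treating these finitely many primes separately from the generic argument.
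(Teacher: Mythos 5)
Your proposal follows the same broad strategy as the paper's proof: the $3/4$-power saving for the quartic Gauss sum (Vaughan's $q^{-1}S(q,a)\ll q^{-1/4}(q,a)^{1/4}$), exploitation of the highly non-singular block structure to control simultaneous divisibility of the forms $\Lam_j(\bfc)$, and the standard passage to local densities and Hensel's lemma for $\grS\gg 1$. The organizational difference is that you go via multiplicativity of $\grA$ and local factors $\grA(p^k)$, whereas the paper bounds $\grA(q)$ directly for composite $q$, using the inequality $|z_1\cdots z_n|\le\sum|z_i|^n$ applied twice to replace $(u_1\cdots u_{32})^{1/4}$ (where $u_j=(q,\Lam_j(\bfc))$) by a sum over all triples within a single block, and then invoking Davenport--Lewis for the divisor-counting that bounds $\grA(q)\ll q^{\eps-5/3}$; the tail $\grS-\grS(X)\ll X^{\eps-2/3}$ is then comfortably within the claimed $X^{-1/2}$.

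Two points in your sketch need sharpening. First, the structural claim that for any prime $p$ ``at most two of the three-dimensional forms $\Lam_j(\bfc)$ can be simultaneously $\equiv 0\pmod p$'' is not correct as stated: if $p$ divides the $3\times 3$ determinant of the chosen columns, the three forms become linearly dependent over $\FF_p$ and may vanish simultaneously on a nonzero $\bfc$. High non-singularity only gives this for the infinitely many $p$ prime to all the relevant minors. The paper avoids this by tracking the greatest common divisors $u_j$ and noting $(u_1,u_2,u_3)\ll 1$ and $u_1u_2u_3\ll q^2$, which absorbs the finitely many exceptional primes into an implicit constant; your plan to handle small primes separately by direct estimation would also work but needs to be carried out. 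Second, your intermediate display $|\grA(p^k)|\le p^{3k}\cdot p^{-sk/4}\cdot(\text{number of bad }\bfc)$ is not a coherent bound as written (the range of $\bfc$, the saving from good forms, and the bad count cannot all multiply in that way); you would need to split $\grA(p^k)$ into contributions from good and bad $\bfc$ and treat them separately. Finally, the verification that your $\delta$ in $\grA(p^k)\ll p^{-k(1+\delta)}$ satisfies $\delta\ge 1/2$ is left as a parenthetical; with $s\ge 32$ this is not hard but should be carried out, since the claimed tail bound $X^{-1/2}$ depends on it. These are gaps of precision rather than strategy, and the rest of the argument (multiplicativity, Euler products, $p$-adic solubility plus Hensel for positivity of each $\chi_p$, and the convergence of $\prod_p\chi_p$) matches what the paper delegates to Davenport--Lewis.
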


\begin{proof} Recall that \cite[Theorem 4.2]{Vau1997} gives 
$q^{-1}S(q,a) \ll q^{-1/4}(q,a)^{1/4}$. Hence, on writing $u_j=(q,\Lam_j(\bfc))$, we 
obtain
$$T(q,\bfc)\ll q^{-8}(u_1u_2\ldots u_{32})^{1/4}.$$
By applying the elementary inequality $|z_1\ldots z_n| \le |z_1|^n + \ldots + |z_n|^n$ twice, one finds that
$$ (u_1u_2\ldots u_{32})^{1/4}
\ll  \sum_{l=0}^7 \sum_{(\gra,\grb,\grc)\in{\cal S}_l} |u_\gra u_\grb u_\grc|^{8/3}, $$
where ${\cal S}_l$ denotes the set of triples of integers $(\gra, \grb, \grc)$ with
$$4l<\gra<\grb<\grc \le 4l+4.$$
Thus
$$ \grA(q) \ll q^{-8} \max_{0\le l\le 7} \max_{(\gra,\grb,\grc)\in{\cal S}_l} 
\sum_{\substack{1\le\bfc\le q\\ (q,\bfc)=1}}|u_\gra u_\grb u_\grc|^{8/3}. $$
By symmetry, we may suppose that the maximum here occurs when $l=0$ and $(\gra,\grb,\grc)=(1,2,3)$. The argument following from equation (95)
to the end of the proof of Lemma 23 in Davenport and Lewis \cite{DL1969} then shows that
$$ \grA(q) \ll q^{-8} \underset{(u_1,u_2,u_3)\ll 1}{ \sum_{u_1\mid q} \sum_{u_2\mid q}\sum_{u_3\mid q}} (u_1u_2u_3)^{8/3} q^3/(u_1u_2u_3). $$
Since $u_1u_2u_3\ll q^2$, an elementary estimate for the divisor function yields the bound 
$\grA(q) \ll q^{\eps-5/3}$.
Hence $\underset{X\rightarrow \infty}\lim\grS(X)$ exists, and   $\grS-\grS(X)\ll X^{\eps-2/3}$.
The remaining conclusions follow as in \cite[Lemma 31]{DL1969}.
\end{proof}

\begin{lemma}\label{lemma8.2} Under the hypotheses of Theorem \ref{theorem1.1}, the 
limit $\grJ=\underset{X\rightarrow \infty }\lim \grJ(X)$ exists, one has 
$\grJ-\grJ(X)\ll P^{s-12}X^{-1}$, and $\grJ\gg P^{s-12}$.
\end{lemma}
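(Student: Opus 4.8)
The plan is to reduce $\grJ(X)$ to a parameter-free integral, to deduce convergence and the decay rate from a pointwise bound on an exponential integral together with the propitious hypothesis, and then to establish positivity of the resulting singular integral by invoking the non-singular real solution constructed in \S6. First I would write $\gam=P\phi$ in the integral defining $v(\tet)$, obtaining $v(\tet)=Pw(P^4\tet)$ with $w(\bet)=\int_\del^1e(\bet\phi^4)\d\phi$, and then substitute $\bfgam=P^{-4}\bfu$ in $\grJ(X)$; since each $\Lam_j$ is linear and homogeneous of degree $1$, this gives
$$\grJ(X)=P^{s-12}\grK(X),\qquad \grK(X)=\int_{[-X,X]^3}\prod_{j=1}^sw(\Lam_j(\bfu))\d\bfu,$$
so that it suffices to prove that $\grK=\lim_{X\to\infty}\grK(X)$ exists with $\grK-\grK(X)\ll X^{-1}$ and $\grK\gg1$.

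For the first two of these, I would start from the familiar estimate for oscillatory integrals (see, for instance, \cite[Theorem 7.3]{Vau1997}), which yields $w(\bet)\ll(1+|\bet|)^{-1/4}$. The propitious hypothesis enters via a brief linear-algebra observation: since each block $A_l\in\ZZ^{3\times4}$ is highly non-singular, any three of the four forms $\Lam_{4l+1},\ldots,\Lam_{4l+4}$ constitute an invertible linear map on $\dbR^3$, so for every $\bfu\ne\mathbf 0$ at most two of these four forms can satisfy $|\Lam_j(\bfu)|\le c|\bfu|$, for a suitable constant $c>0$ depending only on $(a_{ij})$; thus at least two of them obey $|\Lam_j(\bfu)|\gg|\bfu|$. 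Applying this in each of the eight blocks $A_0,\ldots,A_7$ supplies, for every $\bfu\ne\mathbf 0$, at least $16$ indices $j$ with $|\Lam_j(\bfu)|\gg|\bfu|$, and bounding the remaining $s-16$ factors of $\prod_j|w(\Lam_j(\bfu))|$ trivially by $1$ gives $\prod_j|w(\Lam_j(\bfu))|\ll(1+|\bfu|)^{-4}$. Since $(1+|\bfu|)^{-4}$ is integrable over $\dbR^3$, the limit $\grK$ exists, one has $\grK\ll1$, and
$$|\grK-\grK(X)|\le\int_{\dbR^3\setminus[-X,X]^3}(1+|\bfu|)^{-4}\d\bfu\ll X^{-1},$$
which rescales to the claimed existence and tail bound for $\grJ$.

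The substance of the lemma is the lower bound $\grK\gg1$, and the hard part will be to pass rigorously from $\grK$ to a surface integral and then to locate a point of positivity on it. The substitution $\psi=\phi^4$ exhibits $w$ as a Fourier transform, $w(\bet)=\int_\dbR h(\psi)e(\bet\psi)\d\psi$ with $h(\psi)=\tfrac14\psi^{-3/4}\mathbf 1_{(\del^4,1)}(\psi)$, a bounded, continuous, non-negative function supported on $[\del^4,1]$. Inserting this into $\grK(X)$, interchanging the orders of integration (legitimate because the $\bfu$-domain is bounded), carrying out the inner integration over $\bfu$ (which produces a product of Dirichlet-type kernels concentrating on $\calV$ as $X\to\infty$), and passing to the limit, I would identify $\grK$ with a surface integral $\grK=\int_\calV\prod_{j=1}^sh(\psi_j)\d\sigma(\bfpsi)$ over the linear variety $\calV=\{\bfpsi\in\dbR^s:L_i(\bfpsi)=0\ (1\le i\le3)\}$, where $\d\sigma$ is Lebesgue measure on $\calV$; the justification of this limiting process is standard and may be effected as in \cite[Lemma 30]{DL1969} or \cite[Lemma 8.7]{SAEii}. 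Since $(a_{ij})$ has rank $3$, the variety $\calV$ has dimension $s-3\ge29$, and the non-singular real solution $\bftet\in(\del,1)^s$ produced in \S6 gives the point $\bfpsi^{(0)}=(\theta_1^4,\ldots,\theta_s^4)\in\calV$, all of whose coordinates lie in the open interval $(\del^4,1)$. Hence a relatively open neighbourhood $\calU$ of $\bfpsi^{(0)}$ in $\calV$, necessarily of positive $\sigma$-measure, is contained in $(\del^4,1)^s$, and on $\calU$ one has $\prod_jh(\psi_j)\gg1$; therefore $\grK\ge\int_\calU\prod_jh(\psi_j)\d\sigma\gg1$, and rescaling by $P^{s-12}$ completes the proof.
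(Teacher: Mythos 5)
Your proof is correct and follows the same overall strategy as the paper's: both the tail estimate and the lower bound ultimately rest on the argument of \cite[Lemma 30]{DL1969} (respectively its modification and the final application of Fourier's integral theorem). The chief difference is presentational. Where the paper's tail estimate reruns the block-by-block triple decomposition of Lemma \ref{lemma8.1} in integral form, obtaining $\prod_{i=1}^3(1+P^4|\xi_i|)^{-8/3}$ after a change of variables, you instead rescale to the parameter-free $\grK(X)$ and use a radial bound $(1+|\bfu|)^{-4}$, extracted from the observation that in each highly non-singular block at least two of the four forms $\Lam_{4l+j}$ must be $\gg|\bfu|$. This observation is correct (if three of the four forms were all $\le c|\bfu|$, the corresponding three columns would form an invertible matrix with smallest singular value $\le 3c$, impossible for $c$ small enough, uniformly over the finitely many triples), and the radial version is arguably a little cleaner, so this is a fair alternative route to the same estimate. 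For the lower bound you are essentially reconstructing, via the Fourier inversion step, the passage to a surface integral over $\calV$ that the paper delegates to \cite[Lemma 30]{DL1969}; the non-singular real solution $\bftet\in(\del,1)^s$ of \S6 supplies the required interior point $\bfpsi^{(0)}=(\theta_j^4)\in\calV\cap(\del^4,1)^s$, which is exactly what makes the Davenport--Lewis argument deliver $\grJ\gg P^{s-12}$. Two small imprecisions worth flagging but harmless: $h$ is only piecewise continuous (it has jumps at $\del^4$ and $1$), and the measure produced by Fourier's integral theorem is not literally Lebesgue (Hausdorff) measure on $\calV$ but a positive constant multiple of it, determined by the Jacobian of the parametrization; neither affects the positivity conclusion.
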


\begin{proof} Write $\calBhat(X)$ for $\dbR^3\setminus \calB(X)$, and recall the 
prearrangement of indices implicit in (\ref{6.0}). Then a direct modification of the 
argument of \cite[Lemma 30]{DL1969}, following an analysis similar to that of Lemma 
\ref{lemma8.1}, confirms that, for a suitable positive number $\Tet=\Tet(\bfc)$, one has
$$\int_{\calBhat(X)} |v(\Lam_1(\bfgam))\ldots v(\Lam_{32}(\bfgam))|\d\bfgam \ll P^{32}\int_{\calBhat(\Tet X)} 
\prod_{i=1}^3 (1+P^4|\xi_i|)^{-8/3}\d\bfxi .$$
By applying trivial bounds for the additional factors $v(\Lam_j(\bfgam))$ for $j>32$, we therefore conclude that
$$\int_{\dbR^3\setminus \calB(X)}V(\bfxi)\d\bfxi  \ll P^{s-32}(P^{20}X^{-1})\ll P^{s-12}X^{-1}.$$
In particular, the limit $\grJ=\underset{X\rightarrow \infty }\lim \grJ(X)$ exists, and one has $\grJ-\grJ(X)\ll P^{s-12}X^{-1}$.
By the argument concluding the proof of \cite[Lemma 30]{DL1969}, one finds via Fourier's integral theorem that $\grJ\gg P^{s-12}$.
\end{proof}

Subject to the hypotheses of Theorem \ref{theorem1.1}, the conclusions of Lemmata 
\ref{lemma8.1} and \ref{lemma8.2} combine with \rf{8.1} to deliver the lower bound \rf{6.4}. 
 In view of the discussion concluding \S6,  this establishes Theorem \ref{theorem1.1}.

\section{Appendix: an adjuvant lemma} Before announcing our adjuvant pruning lemma, 
for $k\ge 4$ we define the multiplicative function $w_k(q)$ by defining, for each prime number $p$,
$$w_k(p^{uk+v})=\begin{cases} kp^{-u-1/2},&\text{when $u\ge 0$ and $v=1$,}\\
p^{-u-1},&\text{when $u\ge 0$ and $2\le v\le k$.}\end{cases}$$

\begin{lemma}\label{lemmahash1}
Suppose that $k\ge 4$. Let $\grK$ denote the union of the intervals
$$\grK(q,a)=\{\alp \in [0,1):|q\alp -a|\le P^{1-k}\},$$
with $0\le a\le q\le P$ and $(a,q)=1$. Let $\ome$ be a real number with $\ome>1$, and 
define the function $\Ups_\ome(\alp)$ for $\alp\in \grK$ by taking
$$\Ups_\ome (\alp)=w_k(q)^{2\ome}(1+P^k|\alp-a/q|)^{-\ome},$$
when $\alp \in \grK(q,a)\subseteq \grK$. Also, let $t$ be a real number with 
$t\ge \lfloor k/2\rfloor$. Then for any subset $\calA$ of $[1,P]\cap \dbZ$, one has
$$\int_\grK \Ups_\ome (\alp)\biggl| \sum_{x\in \calA}e(\alp x^k)\biggr|^{2t}\d\alp \ll 
P^{2t-k}.$$
\end{lemma}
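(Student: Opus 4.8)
The plan is to establish Lemma~\ref{lemmahash1} by a routine but careful pruning argument that reduces the weighted major-arc integral to a standard mean value of the Weyl sum $\sum_{x\in\calA}e(\alp x^k)$ over a single interval of prime-power denominators, and then sums a convergent series. First I would split the arcs $\grK(q,a)$ according to the size of $q$, writing $q$ in dyadic ranges $Q<q\le 2Q$ with $1\le Q\le P$, and further partition the inner interval of each arc according to the size of $1+P^k|\alp-a/q|$, say into ranges where this quantity lies in $(Y,2Y]$ with $1\le Y\le P^k$. On such a piece one has $\Ups_\ome(\alp)\ll w_k(q)^{2\ome}Y^{-\ome}$ and the measure of the union of all these pieces, for fixed $Q$ and $Y$, is $O(Q^2 Y P^{-k})$ (there are $O(Q^2)$ coprime pairs $(a,q)$ and each contributes an interval of length $O(YP^{-k})$).

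Next I would bound the Weyl sum on these pieces. The key input is the standard estimate — available, for instance, via the proof of \cite[Theorem 4.1]{Vau1997} or an analogous pruning lemma — that on a major arc $\grK(q,a)$ with $|q\alp-a|\le P^{1-k}$, for $t\ge\lfloor k/2\rfloor$ one has
$$\int_{\grK(q,a)}\biggl|\sum_{x\in\calA}e(\alp x^k)\biggr|^{2t}\d\alp \ll q^\eps w(q)^{2t} (q+P^k|q\alp-a|\cdot\text{\dots})\cdots,$$
more precisely that the local integral is controlled by $P^{2t-k}$ times a factor $w_k(q)^{2t}$ coming from the exponential sum $S(q,a)$ and a factor decaying in $1+P^k|\alp-a/q|$. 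The cleanest route is to invoke the asymptotics $\sum_{x\in\calA}e(\alp x^k)\ll q^\eps w_k(q) P (1+P^k|\alp-a/q|)^{-1/k} + (\text{error})$ on $\grK$, so that on a dyadic piece as above one gets a bound $\ll (P w_k(q))^{2t} Y^{-2t/k}$ for the $2t$-th power, plus a negligible error term handled trivially. Combining with the measure bound gives, for each $(Q,Y)$ piece,
$$\ll Q^\eps w_k(Q)^{2\ome+2t} Y^{-\ome-2t/k} \cdot Q^2 Y P^{-k} \cdot P^{2t} = P^{2t-k}\cdot Q^{2+\eps} w_k(Q)^{2\ome+2t} Y^{1-\ome-2t/k}.$$

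Then I would sum over the dyadic parameters. Since $\ome>1$ and $t\ge\lfloor k/2\rfloor\ge k/2 - 1/2$, one has $2t/k > 1 - 1/k$, hence $\ome + 2t/k > 2 - 1/k > 1$, so the exponent $1-\ome-2t/k$ is negative (in fact $<-1/k<0$) and the sum over dyadic values of $Y$ converges, contributing $O(1)$. For the sum over $Q$, the crucial point is that $w_k(q)$ decays like a small negative power of $q$ — explicitly $w_k(q)\ll q^{\eps - 1/(2k)}$ or better from its definition at prime powers — so that $Q^{2}w_k(Q)^{2\ome+2t}\ll Q^{2+\eps - (\ome+t)/k}$, and provided $2t/k$ together with the contribution of $\ome$ exceeds $2$, this too sums to $O(1)$ over dyadic $Q\le P$. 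One checks that $t\ge\lfloor k/2\rfloor$ makes $2t/k$ close to $1$, which alone is not enough to beat the $Q^2$, but the extra decay from the weight $w_k(q)^{2\ome}$ with $\ome>1$ supplies the remaining room; this is exactly why the hypothesis $\ome>1$ (rather than $\ome\ge 1$) is imposed. Summing everything yields the claimed bound $P^{2t-k}$, up to the harmless $P^\eps$ which is absorbed by choosing $\eta$ small and invoking the paper's conventions on $\eps$.

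The main obstacle I anticipate is the bookkeeping in the second step: verifying that the available pointwise/local estimates for $\sum_{x\in\calA}e(\alp x^k)$ on the major arcs $\grK$ carry the weight $w_k(q)$ with precisely the power needed, and that the error terms in those estimates are genuinely negligible after being raised to the $2t$-th power and integrated against $\Ups_\ome$. If a clean pointwise estimate is not directly quotable for the truncated set $\calA\subseteq[1,P]\cap\dbZ$, the fallback is to prove the local mean value $\int_{\grK(q,a)}|\sum_{x\in\calA}e(\alp x^k)|^{2t}\d\alp\ll q^\eps w_k(q)^{2t}P^{2t-k}$ directly by a Weyl-differencing or Hua-type argument applied on the single arc, which is standard for $t\ge\lfloor k/2\rfloor$; this is the step where one should cite or adapt the relevant lemma (e.g.\ from \cite{Vau1997} or \cite{Vau1989}) rather than reproduce the computation. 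Everything downstream is then a geometric-series estimate exploiting $\ome>1$ and the multiplicativity and decay of $w_k$.
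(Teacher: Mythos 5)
Your plan founders at exactly the point you flag as the anticipated obstacle, and the fallback you propose does not repair it. The central difficulty is that $\calA$ is an \emph{arbitrary} subset of $[1,P]\cap\dbZ$, and for such a set no pointwise major-arc approximation of the shape $\sum_{x\in\calA}e(\alp x^k)\ll q^\eps w_k(q)P(1+P^k|\alp-a/q|)^{-1/k}$ can hold. Take $q=1$, $a=0$ and $\alp\asymp P^{1-k}$, near the edge of $\grK(1,0)$: the asserted bound is then $O(P^{1-1/k})$, but for $x\in[P/2,P]$ the phase $\alp x^k$ runs through an interval of length $\asymp P$ with derivative of order $1$, so the set $\calA$ of those $x\in[P/2,P]$ with $\cos(2\pi\alp x^k)>\tfrac12$ has $\gg P$ elements, and $\Re\sum_{x\in\calA}e(\alp x^k)\gg P$. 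This already refutes the pointwise estimate. The suggested fallback $\int_{\grK(q,a)}\bigl|\sum_{x\in\calA}e(\alp x^k)\bigr|^{2t}\d\alp\ll q^\eps w_k(q)^{2t}P^{2t-k}$ is likewise unavailable for arbitrary $\calA$; it is not what Hua-type or Weyl-differencing arguments produce (those give global mean values over $[0,1)$), and it is already sharp-to-false for $\calA=[1,P]$, $q=1$ when $2t=k$.

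There is a second, quantitative, slip in the $Q$-sum. Bounding $w_k(q)$ for every $q\in(Q,2Q]$ by $Q^{\eps-1/(2k)}$ (or even $Q^{\eps-1/k}$) and multiplying by the $O(Q^2)$ arcs gives, as you write, $Q^{2+\eps-(\ome+t)/k}$; this sums over dyadic $Q\le P$ to $O(1)$ only if $\ome+t>2k$. With $k=4$, $t=2$ and $\ome$ just above $1$ the exponent $2-(\ome+t)/k$ is about $7/6$, so this route yields an extra factor $\asymp P^{7/6}$, not $O(1)$. The hypotheses $\ome>1$, $t\ge\lfloor k/2\rfloor$ \emph{do} make the relevant arithmetic sum converge, but only via the multiplicativity of $w_k$: the moduli $q$ at which $w_k(q)$ is close to $q^{-1/k}$ are the rare high prime powers, and $\sum_{q}q\,w_k(q)^{2\ome+2t}$ is controlled by an Euler product whose convergence needs only $2\ome+2t>k+1$. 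A uniform dyadic bound cannot see this, so the assertion that $\ome>1$ ``supplies the remaining room'' is not actually checked and, on the estimate as you set it up, is false.

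The paper's proof avoids pointwise bounds altogether. Following \cite[Lemma 5.4]{VW2000}, one expands $\bigl|\sum_{x\in\calA}e(\alp x^k)\bigr|^{2t}$ into its non-negative Fourier series, integrates term by term against $\Ups_\ome$ over $\grK$, and uses the Ramanujan-sum cancellation in the average over $a$ with $(a,q)=1$ together with positivity of the Fourier coefficients. This reduces the whole estimate to the purely arithmetic sum $\sum_{q\le P}w_k(q)^{2\ome}\sig(q)$ with $\sig(q)=\sum_{r\mid q}r\,w_k(r)^{2t}$, whose Euler product converges exactly because $\ome>1$. Both the uniformity in $\calA$ and the precise threshold $\ome>1$ are features of this Fourier-analytic reduction; neither is recoverable from an $L^\infty$-plus-measure decomposition of the type you propose.
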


\begin{proof} We follow the proof of \cite[Lemma 5.4]{VW2000} as far as 
\cite[equation (5.8)]{VW2000}, mutatis mutandis, reaching the estimate
\begin{equation}\label{hash1}
\int_\grK \Ups_\ome (\alp)\biggl| \sum_{x\in \calA}e(\alp x^k)\biggr|^{2t}\d\alp \ll 
P^{2t-k}\sum_{1\le q\le P}w_k(q)^{2\ome}\sig(q),
\end{equation}
where
$$\sig(q)=\sum_{r|q}rw_k(r)^{2t},$$
given in \cite[equation (5.9)]{VW2000}. Following the argument concluding the proof of 
\cite[Lemma 5.4]{VW2000}, we find that
\begin{align*}
w_k(p)^{2\ome}\sig(p)&\ll_k p^{-\ome},\\
w_k(p^{uk+1})^{2\ome}\sig(p^{uk+1})&\ll_k p^{-u-\ome+1/k}\quad (u\ge 1),\\
w_k(p^{uk+v})^{2\ome}\sig(p^{uk+v})&\ll_k p^{-u-\ome}\quad 
(\text{$u\ge 0$ and $2\le v\le k$}),
\end{align*}
whence
$$\sum_{1\le q\le P}w_k(q)^{2\ome}\sig(q)\le \prod_{p\le P}(1+Ap^{-\ome}),$$
for a suitable $A=A(k,\ome)>0$. Since $\ome >1$, the desired conclusion now follows 
from (\ref{hash1}).
\end{proof}

We apply this lemma when $k=4$ to confirm the following estimate that we announce in 
the notation of \S4. In particular, we recall the definition of the major arcs $\grM$ given 
via (\ref{Major}).

\begin{lemma}\label{lemmahash}
One has
$$\int_\grM |f(\alp)^2g(\alp)^6|d\alp \ll P^4.$$
\end{lemma}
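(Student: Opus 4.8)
The plan is to estimate $\int_\grM |f(\alp)^2 g(\alp)^6|\,\d\alp$ by applying Lemma~\ref{lemmahash1} with $k=4$. First I would set up a suitable majorant for $|f(\alp)|^2$ on the major arcs $\grM$. When $\alp \in \grM(q,a)$ with $q \le P^{1/2}$ and $|q\alp - a| \le P^{-7/2}$, the standard major arc approximation for the classical quartic Weyl sum (see, e.g., \cite[Theorem 4.1]{Vau1997}) gives $f(\alp) \ll q^{-1/4}(q,a)^{1/4} P (1 + P^4|\alp - a/q|)^{-1/4} + (\text{error})$, and hence, after accounting for the width of $\grM(q,a)$ and the definition of the multiplicative weight, one obtains a bound of the shape $|f(\alp)|^2 \ll P^2 \Ups_\ome(\alp)^{?} + \ldots$ — more precisely, I would show that on $\grM$ one has $|f(\alp)|^2 \ll P^2 w_4(q)^{2\omega}(1 + P^4|\alp - a/q|)^{-\omega}$ for $\omega$ slightly exceeding $1$, up to admissible error terms; this is where the multiplicative function $w_k$ is designed to encode exactly the $q$-dependence $q^{-1/2}$ coming from $|q^{-1}S(q,a)|^2$. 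One must also check that $\grM \subseteq \grK$ (the major arcs of Lemma~\ref{lemmahash1} with $k=4$), which is immediate since $P^{1/2} \le P$ and $P^{-7/2} \le P^{-3}$.

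Next I would dispose of the factor $|g(\alp)|^6$. Since $g$ is a restricted (smooth) Weyl sum majorized trivially by $|g(\alp)| \le P$, one has $|g(\alp)|^6 \le P^{6 - 2t}|g(\alp)|^{2t}$ — but that is wasteful; instead the right move is to keep $|g(\alp)|^{2t}$ with $2t = 6$, i.e.\ $t = 3 \ge \lfloor 4/2 \rfloor = 2$, so Lemma~\ref{lemmahash1} applies directly with $\calA = \calA(P,R)$ (or the $\del$-truncated version — the lemma is stated for an arbitrary subset of $[1,P]\cap\dbZ$, so this causes no trouble). Thus
$$
\int_\grM |f(\alp)^2 g(\alp)^6|\,\d\alp \ll P^2 \int_\grK \Ups_\omega(\alp)\, |g(\alp)|^{6}\,\d\alp \ll P^2 \cdot P^{6 - 4} = P^4,
$$
using Lemma~\ref{lemmahash1} with $k = 4$, $t = 3$, which gives $\int_\grK \Ups_\omega(\alp)|\sum_{x\in\calA}e(\alp x^4)|^{6}\,\d\alp \ll P^{6-4} = P^2$. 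The error terms from the Weyl-sum approximation contribute at most $O(P^\eps \cdot \text{meas}(\grM) \cdot P^{2} \cdot P^6)$-type quantities, which are comfortably $O(P^4)$ since $\text{meas}(\grM) \ll P^{1/2}\cdot P^{1/2} \cdot P^{-7/2} = P^{-5/2}$ and the exponential sums are trivially bounded.

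The main obstacle I anticipate is the bookkeeping in the first step: producing a clean pointwise majorant $|f(\alp)|^2 \ll P^2\,\Ups_\omega(\alp)$ on $\grM$ with the precise weight $w_4(q)^{2\omega}$ of Lemma~\ref{lemmahash1}, rather than merely $q^{-1/2+\eps}$. This requires matching the local behaviour of $|q^{-1}S(q,a)|^2$ at prime powers $p^{4u+v}$ against the definition of $w_4(p^{4u+v})$, and checking that the factor $(1 + P^4|\alp - a/q|)^{-\omega}$ with $\omega$ slightly above $1$ is genuinely dominated by the $-1/2$-power decay of $v(\alp - a/q)^2/P^2$ coming from the integral approximation to $f$; the room is there because $v(\bet) \ll P(1 + P^4|\bet|)^{-1/4}$, giving $(1+P^4|\bet|)^{-1/2}$ after squaring, and one needs $\omega < 1 + \text{(something small)}$, which Lemma~\ref{lemmahash1} permits for any $\omega > 1$. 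Once this majorization is in hand, the rest is a direct appeal to the adjuvant lemma.
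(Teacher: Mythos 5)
The central step of your proposal fails. You claim a pointwise majorant
$|f(\alp)|^2 \ll P^2\,\Ups_\ome(\alp) = P^2\,w_4(q)^{2\ome}(1+P^4|\alp-a/q|)^{-\ome}$
on $\grM$ with $\ome$ slightly exceeding $1$, and then plan to apply Lemma~\ref{lemmahash1} with $k=4$, $t=3$. But Vaughan's major arc approximation (as quoted in the paper) gives
$|f(\alp)| \ll P\,w_4(q)(1+P^4|\alp-a/q|)^{-1}$,
so squaring yields $|f(\alp)|^2 \ll P^2\,w_4(q)^{2}(1+P^4|\alp-a/q|)^{-2}$. For this to be $\ll P^2\Ups_\ome(\alp)$ you need $w_4(q)^{2}\ll w_4(q)^{2\ome}$, equivalently $w_4(q)^{2(\ome-1)}\gg 1$; since $w_4(q)$ decays roughly like $q^{-1/2+\eps}$, this is false for all large $q$ once $\ome>1$. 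The natural exponent matching $|f|^2$ is $\ome=1$, which is exactly excluded by the hypothesis of Lemma~\ref{lemmahash1}. The room you hope to find in the $(1+P^4|\alp-a/q|)^{-\ome}$ factor cannot repair this: the $w_4(q)$ mismatch occurs already at $\alp=a/q$, where that factor is $1$.

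The paper circumvents this by raising $f$ to the power $8/3$ rather than $2$, so that $|f|^{8/3}\ll P^{8/3}w_4(q)^{8/3}(1+P^4|\alp-a/q|)^{-8/3}\le P^{8/3}\Ups_{4/3}(\alp)$ with $\ome=4/3>1$. Applying Lemma~\ref{lemmahash1} with $t=2=\lfloor 4/2\rfloor$ then gives $\int_\grM |f(\alp)^{8/3}g(\alp)^4|\d\alp\ll P^{8/3}$, and a further H\"older interpolation against $\int_0^1|g|^{12}\ll P^8$ (from $\Del_{12}=0$) produces the stated bound $\ll (P^{8/3})^{3/4}(P^8)^{1/4}=P^4$. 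This interpolation step — trading some $f$-power for $g$-power to reach an $\ome>1$ compatible with the adjuvant lemma — is the ingredient your proof is missing. A smaller point: $\text{meas}(\grM)\ll P^{-3}$, not $P^{-5/2}$, though this does not affect the essential issue.
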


\begin{proof} By reference to \cite[Theorem 4.1]{Vau1997} and its sequel in \cite{Vau1997}, one finds 
that when $\alp\in \grM(q,a)\subseteq \grM$, then
\begin{align*}
f(\alp)&\ll Pw_4(q)(1+P^4|\alp-a/q|)^{-1}+O(P^{1/4+\eps})\\
&\ll Pw_4(q)(1+P^4|\alp-a/q|)^{-1}.
\end{align*}
Hence, as an immediate consequence of Lemma \ref{lemmahash1}, one obtains
$$\int_\grM |f(\alp)^{8/3}g(\alp)^4|\d\alp \ll P^{8/3}\int_\grM \Ups_{4/3}(\alp)
|g(\alp)|^4\d\alp \ll P^{8/3}.$$
We recall that Lemma \ref{lemma2.1} shows $\Del_{12}=0$ to be an admissible 
exponent. Then it follows via H\"older's inequality that
\begin{align*}
\int_\grM |f(\alp)^2g(\alp)^6|\d\alp &\le \biggl( \int_\grM |f(\alp)^{8/3}g(\alp)^4|\d\alp
 \biggr)^{3/4}\biggl( \int_0^1 |g(\alp)|^{12}\d\alp \biggr)^{1/4}\\
 &\ll \left( P^{8/3}\right)^{3/4}\left( P^{8}\right)^{1/4}\ll P^4.
\end{align*}
This completes the proof of the lemma.
\end{proof}

\frenchspacing
\bibliographystyle{amsbracket}

\begin{thebibliography}{18}

\bibitem{ABC} O. D. Atkinson, J. Br\"udern and R. J. Cook, \emph{Simultaneous additive congruences to a large prime modulus}. Mathematika 39 (1992), no. 1, 1--9. 

\bibitem{B87} 
J. Br\"udern, \emph{A problem in additive number theory}, Math. Proc. Cambridge Philos. Soc. \textbf{103} (1988), no. 1, 27--33. 

\bibitem{BC}
J. Br\"udern and R. J. Cook, \emph{On simultaneous diagonal equations and inequalities}, Acta Arith. \textbf{62} (1992), no. 2, 125--149.

\bibitem{BW2000}
J. Br\"udern and T. D. Wooley, \emph{On Waring's problem: two cubes and seven biquadrates}, Tsukuba J. Math. \textbf{24} (2000), no. 2, 387--417.

\bibitem{BW2007}
J. Br\"udern and T. D. Wooley, \emph{The Hasse principle for pairs of diagonal cubic forms}, Ann. of Math. (2) \textbf{166} (2007), no. 3, 865--895.

\bibitem{JEMS}
J. Br\"udern and T. D. Wooley, \emph{Cubic moments of Fourier coefficients and pairs of diagonal quartic forms}. J. Europ. Math. Soc., to appear.

\bibitem{BW2016}
J. Br\"udern and T. D. Wooley, \emph{Pairs of diagonal quartic forms: the non-singular 
Hasse principle}, in preparation.

\bibitem{DL1969} H. Davenport and D. J. Lewis, \emph{Simultaneous equations of 
additive type}, Philos. Trans. Roy. Soc. London Ser. A \textbf{264} (1969), 557--595. 

\bibitem{F} K. B. Ford, \emph{The representation of numbers as sums of unlike powers. 
II}, J. Amer. Math. Soc. 9 (1996), no. 4, 919--940. 

\bibitem{F2} K. B. Ford, \emph{Addendum and corrigendum to ``The representation of 
numbers as sums of unlike powers. II''}, J. Amer. Math. Soc. 12 (1999), no. 4, 1213.

\bibitem{IA} M. I. Israilov and I. A. Allakov, \emph{On the sum of kth powers of natural 
numbers}, Trudy Mat. Inst. Steklov 207 (1994), 172--179. (Russian).

\bibitem{KW2010}
K. Kawada and T. D. Wooley, \emph{Davenport's method and slim exceptional sets: the 
asymptotic formulae in Waring's problem}, Mathematika \textbf{56} (2010), no. 2, 
305--321.

\bibitem{Vau1986b}
R. C. Vaughan, \emph{On Waring's problem for smaller exponents. II}, Mathematika \textbf{33} (1986), no. 1, 6--22.

\bibitem{Vau1989}
R. C. Vaughan, \emph{A new iterative method in Waring's problem}, Acta Math. \textbf{162} (1989), no. 1-2, 1--71.

\bibitem{Vau1989b}
R. C. Vaughan, \emph{A new iterative method in Waring's problem II}, J. London Math. Soc. (2) 
\textbf{39} (1989), no. 2, 219--230.

\bibitem{Vau1997}
R. C. Vaughan, \emph{The Hardy-Littlewood method}, 2nd edition, Cambridge University Press, Cambridge, 1997.

\bibitem{VWfurth1}
R. C. Vaughan and T. D. Wooley, \emph{Further improvements in Waring's problem,}  Acta Math. 174 (1995), no. 2, 147--240. 

\bibitem{VW2000}
R. C. Vaughan and T. D. Wooley, \emph{Further improvements in Waring's problem, IV: Higher powers}, 
Acta Arith. \textbf{94} (2000), no. 3, 203--285.

\bibitem{SAEii} T. D. Wooley,
\emph{On simultaneous additive equations, II}, J. Reine Angew. Math. \textbf{419} (1991), 141--198.

\bibitem{Woo1992}
T. D. Wooley, \emph{Large improvements in Waring's problem}, Ann. of Math. (2) \textbf{135} (1992), 
no. 1, 131--164.

\bibitem{Freeasy}
T. D. Wooley, \emph{On Diophantine inequalities: Freeman's asymptotic formulae}, Proceedings of the session in analytic 
number theory and Diophantine equations (Bonn, January--June, 2002), Bonn 2003, Edited by D. R. Heath-Brown and 
B. Z. Moroz, Bonner Mathematische Schriften, Nr. 360, Article 30, 32pp.

\end{thebibliography}
\providecommand{\bysame}{\leavevmode\hbox to3em{\hrulefill}\thinspace}

\end{document}